 \makeatletter \@addtoreset{equation}{section}
\newtheorem{lemma}{Lemma}[section]
\newtheorem{theorem}[lemma]{Theorem}
\newtheorem{remark}[lemma]{Remark}
\newtheorem{proposition}[lemma]{Proposition}
\newtheorem{hyp}[lemma]{Hypotheses}
\newcommand{\R}{\mathbb{R}}
\newcommand{\HH}{\mathbb{H}}
\newcommand{\N}{\mathbb{N}}
\newcommand{\Deltah}{\varDelta_{\mathbb{H}}}
\newcommand{\nablah}{\nabla_{\mathbb{H}}}
\begin{document}
\title[Singular potentials on Heisenberg groups]{Instantaneous blowup and singular potentials on Heisenberg groups}

\author[G.R. Goldstein]{Gisele R. Goldstein}
\thanks{}
\address{Department of Mathematical Sciences, University of Memphis, 38152 Memphis,
TN, USA.}
\email{ggoldste@memphis.edu}
\author[J.A. Goldstein]{Jerome A. Goldstein}
\thanks{}
\address{Department of Mathematical Sciences, University of Memphis, 38152 Memphis,
TN, USA.}
\email{jgoldste@memphis.edu}
\author[A.E. Kogoj]{Alessia E. Kogoj}
\address{Dipartimento di Scienze Pure e Applicate, Universit\`a degli Studi di Urbino Carlo Bo, Piazza della Repubblica, 13,
 I 61029 Urbino (PU), Italy.}
\email{alessia.kogoj@uniurb.it}

\author[A. Rhandi]{Abdelaziz Rhandi}

\address{Dipartimento di Ingegneria dell'Informazione, Ingegneria Elettrica e Matematica Applicata, Universit\`a degli
Studi di Salerno, Via Giovanni Paolo II, 132, I 84084 Fisciano (SA), Italy.}
\email{arhandi@unisa.it}
\author[C. Tacelli]{Cristian Tacelli}
\address{Dipartimento di Matematica, Universit\`a degli
Studi di Salerno, Via Giovanni Paolo II, 132, I 84084 Fisciano (SA), Italy.}
\email{ctacelli@unisa.it}
\subjclass[2010]{47D07, 47D08; 35J10, 35K20}

\begin{abstract}
In this paper we generalize the instantaneous blowup result from \cite{BG84} and \cite{GZ1} to the heat equation perturbed by singular potentials on the Heisenberg group.
\end{abstract}

\maketitle

\section{Introduction}

The problem of existence and nonexistence of nonnegative solutions to the heat equation with singular potentials  $V_c^*(x)=\dfrac{c}{|x|^2},\,x\in \Omega_N$,
\begin{equation}\label{eq1}
\left\{\begin{array}{lll}
\dfrac{\partial u}{\partial t}(x,t)=\varDelta u(x,t)+V_c^*(x)u(x,t)\quad (x,t)\in\Omega_N\times (0,\infty),\\  \\
u(x,0)=u_0(x),\quad x\in \Omega_N ,
\end{array}
\right.
\end{equation}
where $\Omega_N=\left\{ \begin{array}{ll}
\R^N\,\hbox{\ if }N\ge 2,\\
(0,\infty)\,\hbox{\ if }N=1,
\end{array}
\right.$ was settled and solved by Baras and Goldstein \cite{BG84}. For $\Omega_1=(0,\infty)$ one has to add a Dirichlet boundary condition at $0$. For simplicity we assume in the sequel that $N\ge 3$ and set $C_*(N):=\left(\frac{N-2}{2}\right)^2$.\\

Obviously, the phenomenon
of existence and nonexistence is caused by the singular potential $V_c^*$, which is controlled by Hardy’s inequality
$$C_*(N)\int_{\R^N}\frac{|\varphi(x)|^2}{|x|^2}dx\le \int_{\R^N} |\nabla \varphi(x)|^2\,dx,\quad \forall \varphi\in C_c^\infty(\R^N),$$
together with its optimal constant $C_*(N)$. Moreover $V_c^*$ belongs to a borderline case where the strong maximum principle and Gaussian bounds fail, cf. \cite{Ar68}.\\

Let $W_n(x)=\inf\{V_c^*(x),n\}$ be the cutoff potential,
with
$c>C_*(N)$. Let $u_n$ be the unique solution of
$$\left\{ \begin{array}{ll}
\dfrac{\partial_t u_n}{\partial t}-\varDelta u_n-W_nu_n=0,\quad \hbox{\ in }{\mathbb R}^N \times (0,\infty),\\ \\
u_n(x,0)=u(x,0)=u_0(x)\ge 0.
\end{array}
\right.
$$
Here $0\not\equiv u_0\in L^2(\R^N)$ or, more generally, $u_0$ grows no
faster than $e^{|x|^{2-\varepsilon}}$ at infinity. Since $W_n$ is bounded, $u_n$ exists. If a
positive solution $u$ to (\ref{eq1}) were to exist, then $0<u_n\le u$ which is a contradiction, since
$u_n(x,t)$ tends to infinity at all spatial points and at all
positive times (see \cite[Theorem 2.2.(ii)]{BG84}). This is called {\it instantaneous blowup}.

Given nonnegative functions $u_0\in L^1(\R^N),\,0\le V\in L^1_{loc}(\R^N)$ and $0\le f\in L^1(\R^N\times (0,T))$, Baras and Goldstein \cite{BG84} considered the problem of finding a function $u$ such that
$$
(P)\quad \left\{\begin{array}{ll}
0\le u \hbox{\ on }\R^N\times (0,T),\,V(\cdot)u\in L^1_{loc}(\R^N\times (0,T)),\\
\\
\dfrac{\partial u}{\partial t}=\varDelta u+V u+f\,\hbox{\ in }\mathcal{D}'(\R^N\times (0,T)),\\
\\
\mathrm{esslim}_{t\to 0^+}\int_{\R^N}u(x,t)\psi(x)\,dx =\int_{\R^N}u_0(x)\psi(x)\,dx,\,\mbox{for all} \  \psi\in \mathcal{D}(\R^N).
\end{array}
\right.
$$
Here $\mathcal{D}(\R^N):=C_c^\infty(\R^N),\,\mathcal{D}(\R^N\times (0,T)):=C_c^\infty(\R^N\times (0,T))$ with the usual topology and $\mathcal{D}'(\R^N\times (0,T))$ the dual of $\mathcal{D}(\R^N\times (0,T))$, is the space of all distributions on $\R^N\times (0,T)$.
\\
Consider the potential
$$W_0(x)=\left\{\begin{array}{ll}
 \dfrac{c}{|x|^2}\quad \hbox{\ if }x\in B_1,\\ \\
 0\quad \quad \hbox{\ if }x\in \R^N\setminus B_1.
 \end{array}
 \right.$$
 Here  $B_1$ can be replaced by  $B_\delta$ for every fixed $\delta>0$, where $B_r$ denotes the ball in $\R^N$ of center $0$ and radius $r>0$.
Baras and Goldstein \cite{BG84} proved the following result.
\begin{theorem}\label{BG}\mbox{}
\begin{itemize}
 \item [(i)] Let $0\leq c\leq C_*(N)$ and let $V\geq 0$ be a measurable potential satisfying $V\in L^{\infty}(\R^N\setminus B_1)$, where $B_1$ denotes the unit ball in $\R^N$. Let $0\le f \in L^1(\R^N\times (0,T))$.  If $V\leq W_0$ in $B_1$,
 then $(P)$ has a positive solution if
 \begin{equation}\label{eq2}
  \int_{\R^N} |x|^{-\alpha}u_0(x)\,dx<\infty\,,\quad \int_0^T\int_{\R^N} f(x,s)|x|^{-\alpha}dxds<\infty ,
 \end{equation}
where $\alpha$ is the smallest root of $\alpha(N-2-\alpha)=c$.
If $V\geq W_0$ in $B_1$, and if $(P)$ has a solution u, then
\[
 \int_{\Omega'} |x|^{-\alpha}u_0(x)\,dx<\infty,\quad \int_0^{T-\varepsilon }\int_{\Omega '}f(x,s)|x|^{-\alpha}dxds<\infty,
\]
for each $\varepsilon \in (0,T)$ and each $\Omega'\subset\subset \R^N$ with $\alpha$ as above.
If either $u_0\neq 0$ or $f\neq 0$ in $\R^N\times (0,\varepsilon)$ for each $\varepsilon\in (0,T)$, then
given $\Omega '\subset\subset \R^N$, there is a $C=C(\varepsilon, \Omega')>0$ such that
\begin{equation}\label{eq3}
 u(x,t)\geq \frac{C}{|x|^\alpha} \text{ if } (x,t)\in \Omega'\times [\varepsilon,T).
\end{equation}
\item[(ii)] If $c>C_*(N)$, $V\geq W_0$ and either $u_0 \not\equiv 0$ or $f\not\equiv 0$, then $(P)$ does not have a positive solution.
\end{itemize}
\end{theorem}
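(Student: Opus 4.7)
The plan revolves around the function $\Phi(x)=|x|^{-\alpha}$, which satisfies $-\Delta\Phi=c|x|^{-2}\Phi$ on $\R^N\setminus\{0\}$ exactly when $\alpha(N-2-\alpha)=c$. This quadratic admits real roots iff $c\le C_*(N)$, the smaller being $\alpha=\tfrac{N-2}{2}-\sqrt{C_*(N)-c}\in[0,\tfrac{N-2}{2}]$; when $c>C_*(N)$ the roots become complex, so no positive stationary profile exists near the singularity. This dichotomy drives both parts of the theorem.

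For~(i), the comparison principle reduces both halves to the case $V=W_0$. On the existence side I would approximate by the bounded cutoffs $W_0\wedge n$, produce classical solutions $u_n$ via standard semigroup theory, and test against $\Phi$ (with a smooth spatial cutoff, then removing it) to obtain the a priori bound
\[
 \int_{\R^N}u_n(x,t)\Phi(x)\,dx\le\int_{\R^N}u_0\Phi\,dx+\int_0^t\!\!\int_{\R^N} f\Phi\,dx\,ds,
\]
using $\Delta\Phi+W_0\Phi\le 0$ distributionally on $B_1$ and $\Delta\Phi\le 0$ outside. Hypothesis~\eqref{eq2} then provides a uniform $L^1(\Phi\,dx)$ bound and monotone convergence yields a positive solution of~$(P)$. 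For necessity under $V\ge W_0$, the reverse procedure---multiplying a hypothetical $u$ by a smoothed $\Phi$ times a space-time cutoff and integrating by parts---produces the required finiteness of $\int_{\Omega'}u_0(x)|x|^{-\alpha}dx$ and of the corresponding integral of~$f$. The lower bound~\eqref{eq3} then follows from the parabolic Harnack inequality applied on subsets where the potential is bounded, benchmarked against the stationary profile~$\Phi$.

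The crux is~(ii). For $c>C_*(N)$ Hardy's inequality with constant~$c$ fails, so choosing $\varphi\in C_c^\infty$ with $\int|\nabla\varphi|^2<c\int\varphi^2/|x|^2$ and rescaling by $\varphi_\rho(x)=\rho^{N/2}\varphi(\rho x)$ makes the quadratic form $Q_n[\cdot]:=\int|\nabla\cdot|^2-\int V_n|\cdot|^2$ associated to the truncation $V_n:=V\wedge n\ge W_0\wedge n$ unbounded below as $n\to\infty$; concretely, a scaling argument shows the bottom eigenvalue $\lambda_n$ of $-\Delta-V_n$ satisfies $\lambda_n\to-\infty$. If a nonnegative distributional solution~$u$ of~(P) existed with $V\ge W_0$, the comparison principle applied to the truncated problems would force $u\ge u_n$ for every~$n$. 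The strategy is then to establish that $u_n(x,t)\to\infty$ at every $(x,t)\in(\R^N\setminus\{0\})\times(0,T)$ when the data are nontrivial, producing the desired contradiction.

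The main obstacle is exactly this pointwise blowup: converting the spectral divergence $\lambda_n\to-\infty$ into pointwise growth at fixed $(x,t)$. To attack it I would combine the semigroup lower bound $u_n(\cdot,t)\ge e^{t(\Delta+V_n)}u_0$ with a spectral decomposition using an $L^2$-normalized near-minimizer $\varphi_n$ of $Q_n$. Positivity improvement of $e^{t(\Delta+V_n)}$, inherited from the heat semigroup through domination, gives $e^{t(\Delta+V_n)}u_0\ge c_n e^{-t\lambda_n}\varphi_n$ for some $c_n>0$, while ultracontractive $L^2\to L^\infty$ smoothing of the unperturbed heat semigroup lets one bound $(e^{\tau\Delta}\varphi_n)(x)$ from below on compacts away from the singularity. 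The exponential factor $e^{-t\lambda_n}$ then dwarfs any polynomial decay of the prefactor and forces $u_n(x,t)\to\infty$, closing the argument.
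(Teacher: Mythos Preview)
The paper does not prove Theorem~\ref{BG} (it is cited from \cite{BG84}), but it proves the Heisenberg analogue, Theorem~\ref{th:1}, by the Baras--Goldstein method; that is the relevant comparison.

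Your existence argument in~(i) matches the paper: test the approximants $u_n$ against $\Phi=|x|^{-\alpha}$, use $\Delta\Phi+W_0\Phi\le 0$, and pass to the limit by monotone convergence. For the lower bound~\eqref{eq3}, however, ``parabolic Harnack benchmarked against $\Phi$'' is too vague. Harnack applies only where the potential is bounded and does not by itself produce the profile $|x|^{-\alpha}$ at the origin. The paper's argument (Lemma~\ref{technical-lemma} in the Heisenberg setting) is a Moser iteration: one compares with a Dirichlet solution $v_n$ on a small ball, sets $g_n=v_n/\phi$, proves $\|g_n\|_\infty\le C_0$ by an $L^p$ estimate, and then obtains the \emph{lower} bound on $g$ by iterating with convex test functions approximating $g^{-\gamma}$. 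The necessary conditions on $u_0,f$ are then deduced from a duality formula (the analogue of~\eqref{eq:4.3}) involving solutions with Dirac initial data, combined with the lower bound; your ``multiply by a smoothed $\Phi$ and integrate by parts'' does not directly yield this direction because the singular terms have the wrong sign.

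The real gap is in~(ii). Your spectral route ($\lambda_n\to-\infty$) is precisely the Cabr\'e--Martel approach the paper discusses, and the paper points out that it gives only nonexistence of exponentially-bounded-in-time solutions, not instantaneous blowup. Your proposed upgrade does not close this: in $e^{t(\Delta+V_n)}u_0\ge c_n e^{-t\lambda_n}\varphi_n$ you have no control on $c_n$, and the ground states $\varphi_n$ concentrate at the origin, so both $c_n$ and $\varphi_n(x)$ for $x\ne 0$ may decay faster than $e^{-t\lambda_n}$ grows; ultracontractivity gives upper, not lower, pointwise bounds. The paper bypasses this entirely by bootstrapping from~(i): at the critical value $c=C_*(N)$ the lower bound~\eqref{eq3} reads $u\ge C|x|^{-(N-2)/2}$, and writing the supercritical equation as $\partial_t u-\Delta u=C_*(N)|x|^{-2}u+(c-C_*(N))|x|^{-2}u$, the necessary integrability condition from~(i) applied to the last term would force $|x|^{-2}\cdot|x|^{-(N-2)/2}\cdot|x|^{-(N-2)/2}=|x|^{-N}\in L^1_{loc}$, which is false. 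This reduction of~(ii) to the sharp lower bound in~(i) is the idea your proposal is missing.
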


Many extensions of the above result have been done by several authors, cf. \cite{CGRT}, \cite{DR}, \cite{FR}, \cite{FGR}, \cite{GGR}, \cite{GHR}, \cite{GK}, \cite{GZ1}, \cite{GZ2}, \cite{HR}.
In this article we present a new result of this type replacing the Laplacian on $\R^N$ by the sub-Laplacian $\Deltah$ (also known as the Kohn Laplacian) on the Heisenberg group $\HH^N$. For the definitions see Section \ref{Hei-section}.

For this purpose let us
consider, for $w= (z,l) \in \HH ^N$, the problem
\begin{equation}\label{eq:pb1}
 \left\{
 \begin{array}{ll}
  \dfrac{\partial u}{\partial t}(w,t)=\Deltah u(w,t)+V_*(w)u(w,t)+f(w,t)& t>0,\,w\in \HH^N,\\
  \\
  u(w,0)=u_0(w),&w\in \HH^N.
 \end{array}
 \right.
\end{equation}
Assume $u_0\geq 0$, $f\geq 0$ and as $V_*$ choose
the corresponding critical potential in the case of the Heisenberg group $\HH^N$
\[
V_*(w)=c\frac{|z|^2}{|z|^4+l^2},\quad w=(z,l)\in \HH^N.
\]

We thus look at the problem
$$
 (P)_{\HH^N}\quad
 \left\{
 \begin{array}{ll}
  \dfrac{\partial u}{\partial t}=\Deltah u+V_* u + f &\text{in }\HH^N\times (0,T),\\
  \\
  u(w,0)=u_0(w) & w\in \HH^N,
 \end{array}
 \right.
 $$
with $u_0\geq 0$ and $u_0\not\equiv 0$ a.e.
Set $V_n(w)=\min\{V_*(w) , n\}$, $f_n(w,t) =\min\{f(w,t), n\}$. Let $u_n$ be the unique nonnegative solution of
$$
 (P_n)_{\HH^N}\quad \left\{
 \begin{array}{ll}
  \dfrac{\partial u_n}{\partial t}=\Deltah u_n+V_n u_n  + f_n &\text{ in }\HH^N\times (0,T),\\
  \\
  u_n(w,0)=u_0(w)&w\in\HH^N,
 \end{array}
 \right.
$$
and assume that $u_n$ exists.
We only need to assume that the heat equation with no potential has a global positive solution 
when $u_0$ is the initial value, see \eqref{(2.5j)} and Theorem \ref{Gaussian-estim} below. It is sufficient that $u_0\in L^2_{loc}(\HH^N)$  and $u_0$ grows no faster 
than $e^{d^2(w) -\varepsilon}$ at infinity, where $d(\cdot)$ is the function given by \eqref{*}.

Let
\[
 C^*(N)=N^2.
\]
We will prove, for $u_n$ the solution of $(P_n)_{\HH^N}$, that
\begin{itemize} 
 \item [(I)] If $0< c\leq C^*(N)$, then $$\lim_{n\to \infty}u_n(w,t)=u(w,t),\quad (w,t)\in \HH^N\times (0,T),$$ exists and is a solution of $(P)_{\HH^N}$.
 \item [(II)] If $c>C^*(N)$, then \begin{equation}\label{1.5gis}  \lim_{n\to \infty}u_n(w,t)=+\infty \end{equation} for all $(w,t)\in \HH^N\times (0,T).$
\end{itemize}
The conclusion in (II), namely \eqref{1.5gis},  is known as {\it instantaneous blowup}, or  (IBU) for short.

In the existence case $(I)$, by the maximum principle for $\Deltah$,
it is clear that we can replace $V_n,V_*$ by $\tilde V_n,\tilde V_*$ where $\tilde V_n\leq V_n$, $\tilde V_*\leq V_*$ a.e. for each $n$.
Similarly, for the nonexistence result $(II)$, we can replace $V_n,V_*$ by $\tilde V_n, \tilde V_*$ where
$\tilde V_n\geq V_n$, $\tilde V_*\geq V_*$ a.e. (at least in a neighborhood of the origin).

The paper is organized as follows. In the next section we recall the definitions of the Heisenberg group $\HH^N$ and the sub-Laplacian $\Deltah$ on $\HH^N$. We also give some known properties of $\Deltah$ that we need in this paper.
In Section \ref{main-results} we state and prove the main results of this paper. In the Appendix  we prove some technical lemmas that we use in the proof of the main results.

 This paper treats the same  basic problem as did \cite{GZ1}. There, the existence part of Theorem 3.4 was proved, using a different method. But part (ii) of Theorem 3.4 is much stronger than the corresponding result of \cite{GZ1}.

In 1999, X. Cabr\'e and Y. Martel \cite{CM} gave a different approach to a more general problem. The paper \cite{BG84} treated a potential $V\geq 0$ with only one singularity, at the origin, while  \cite{CM} allowed for a much more general  potential which 
one takes to be  $0\le W \in L^1_{loc}(\R^N\setminus  \{0\}).$ In \cite{CM} the authors defined the ``generalized first eigenvalue''  of the Schr\"odinger operator $-\varDelta - W$ as 
\begin{eqnarray*}
& & \sigma_W=\inf_{u\in C_c^1(\R^N),\,\|u\|_{L^2}=1} \left\{ \int_{\R^N} (|\nabla u(x)|^2 - W(x) |u(x)|^2)\ dx \right\} \\
& & \left(\hbox{\ or } \sigma_W=\inf_{u\in C_c^1(\R^N\setminus \{0\}),\,\|u\|_{L^2}=1} \left\{ \int_{\R^N} (|\nabla u(x)|^2 - W(x) |u(x)|^2)\ dx \right\}\hbox{\ if }N\le 2\right).
\end{eqnarray*}
 
Note that for $W(x)=\frac{c}{|x|^2},\,x\in \R^N$, $\sigma_W= -\infty$ if $c> C_*(N)$ and $\sigma_W> -\infty$ if $c\le  C_*(N).$ Roughly speaking, in \cite{CM} the existence of positive solutions, when $\sigma_W>-\infty$ and for  $\sigma_W=-\infty$, was obtained; further the authors proved that there is no globally defined pointwise solution that is exponentially bounded in time. This is a much weaker conclusion than the instantaneous blowup (IBU).  
 
 In the $\HH^N$ setting, the authors in \cite{GZ1} used the method of  \cite{CM} and proved nonexistence of globally defined (in $(x,t)$) positive solutions that grow at most exponentially for $c>C^*(N).$ But the question of (IBU) remained open until now.

 \section{Notations and preliminaries}\label{Hei-section}
The Heisenberg group and its sub-Laplacian play a crucial role in several branches of harmonic analysis, complex geometry and partial differential equations (see e.g. \cite{folland_stein_1974,garofalo_lanconelli_1990,jerison_lee_1988,jerison_lee_1989}; see also the survey papers \cite{howe, semmes}).

The Heisenberg group  $\HH^N, N\in \N$, is the stratified Lie group of step two
\begin{equation}\label{heisenberggroup} \ (\R^{2N+1}, \circ, D_\lambda). \end{equation}
If we denote the generic point of $\R^{2N+1}$ by $w=(z,l)=(x,y,l)$, with $x,y\in \R^N$ and $l\in\R$, the composition law $\circ$ is defined by
$$(x,y,l)\circ (x',y',l')=(x+x',y+y', l+l' +2 (x'\cdot y-y'\cdot x)),$$ where $x\cdot y$ denotes the inner product in $\R^N$.

In \eqref{heisenberggroup}, $D_\lambda$, $\lambda>0$ denotes the anisotropic dilation
$$D_\lambda : \R^{2N+1} \longrightarrow \R^{2N+1}, D_\lambda(z,l)=(\lambda z, \lambda^2 l).$$

The family $(D_\lambda)_{\lambda>0}$ is a group of automorphisms of $\HH^N$, that is,

$$D_\lambda((z,l)\circ (z',l')) = (D_\lambda(z,l)\circ D_\lambda(z',l')) .$$

The real number $$Q:=2N + 2$$
is called the {\it homogeneous dimension} of $\HH^N$ since it appears in the formula
$$|D_\lambda(A)|=\lambda^Q |A|,$$ where $A\subseteq \R^{2N+1}$ is a Lebesgue measurable set and $|A|$ stands for the Lebesgue measure of $A$.

A basis for the Lie algebra of left invariant vector fields on $\HH^N$ is given by

$$X_j=\partial_{x_j} + 2 y_j \partial_l, \quad Y_j=\partial_{y_j} - 2 x_j \partial_l, \quad j=1,\ldots,N.$$

One easily calculates that
\begin{eqnarray}\label{heisenbergcommutators} [X_j,X_k]=  [Y_j,Y_k]= 0\  \mbox{for every} \ j,k=1,\ldots, N,\ \mbox{and}\  [X_j,Y_k]= -4\delta_{jk} \partial_l.
\end{eqnarray}
These are the canonical commutation relations of Quantum Mechanics for position and momentum, whence $\HH^N$ is called the Heisenberg group.

The subelliptic gradient is the gradient taken with respect to the horizontal directions $\nabla_{\HH}:=(X_1,\ldots, X_N,Y_1,\ldots ,Y_N)$ and the
sub-Laplacian on $\HH^N$  is $$\Deltah := \sum_{j=1}^N (X_j^2 + Y_j^2)=\nabla_{\HH}\cdot \nabla_{\HH},$$
and it can be explicitly also written as 

$$\Deltah = \varDelta_z + 4 |z|^2 \partial_l^2 + 4 \partial_l T,$$
where
$$ \varDelta_z =  \sum_{j=1}^N  (\partial_{x_j}^2 + \partial_{y_j}^2)$$
and $$T= \sum_{j=1}^N (y_j\partial_{x_j} - x_j \partial_{y_j}).$$

From \eqref{heisenbergcommutators}  it immediately follows that
$$\mathrm{rank\ } \mathrm{Lie \ }  (X_1, \ldots, X_N, Y_1, \ldots, Y_N) (z,l) = 2N+1$$
at any point $(z,l)\in \R^{2N+1}.$  Then, by a celebrated theorem of H\"ormander, $\Deltah$ is hypoelliptic, that is, every distributional solution of $\Deltah u=f$ is smooth
whenever $f$ is smooth.

The operator $\Deltah$ is left translation invariant on $\HH^N$ and $D_\lambda$-homogeneous of degree two. Moreover $\Deltah$ has a fundamental solution (with a pole at the origin)  given by
\begin{equation} \gamma(w)=c_N \left( \frac{1}{d(w)}\right)^{Q-2}\!\!\!\!\!\! =\, c_N \left( \frac{1}{d(w)} \right)^{2N}\!\!\!\!\!\!\!,\ \quad w\neq (0,0), \end{equation}
where
\begin{equation}\label{*}
d(w)= (|z|^4 + l^2)^{\frac{1}{4}}\,\hbox{\ for }w=(z,l)\in \HH^N
\end{equation}
defines the metric $\rho(w,\tilde{w}):=d(\tilde{w}^{-1}\circ w)$ on $\HH^N$, and $\tilde{w}^{-1}$ denotes the inverse of $\tilde{w}$ in the group  $\HH^N$.

In the following lemma we summarize some properties of $d$ and its gradient $\nabla_{\HH}$ which one can obtain by simple computations, see \cite[Proposition 5.4.3]{BLU}.
\begin{lemma}\label{lem-d}
For $d(w)= (|z|^4 + l^2)^{\frac{1}{4}},\,w=(z,l)\in \HH^N$, the following hold:
\begin{eqnarray}\label{eq:11-2}
|\nabla_\HH d(w)|^2 &=& |z|^2(|z|^4+l^2)^{-\frac{1}{2}},\nonumber \\
\Deltah d(w)&=&\frac{Q-1}{d(w)}|\nabla_\HH d(w)|^2,\nonumber \\
-\Deltah d^{-\alpha}(w) &=& Cd^{-\alpha}(w)\frac{|z|^2}{|z|^4+l^2}
\end{eqnarray}
for $w\in \HH^N\setminus \{(0,0)\}$, where $C:=\alpha(Q-2-\alpha)=\alpha(2N-\alpha)$. So, $\Deltah d^{-\alpha}\in L^1_{loc}(\HH^N)$ if and only if $2N-\alpha >0$.
\end{lemma}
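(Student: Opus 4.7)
The plan is to verify the three identities by direct computation from the definitions $X_j=\partial_{x_j}+2y_j\partial_l$, $Y_j=\partial_{y_j}-2x_j\partial_l$, and $d(w)=(|z|^4+l^2)^{1/4}$, then read off the local integrability condition from scaling. Only the first identity requires explicit bookkeeping; the remaining two follow by the chain rule for $\Deltah$ acting on functions of $d$.

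First I would compute $\partial_{x_j}d = |z|^2 x_j d^{-3}$, $\partial_{y_j}d = |z|^2 y_j d^{-3}$, $\partial_l d = l\, d^{-3}/2$, so that
$$X_j d = \frac{|z|^2 x_j + y_j l}{d^3}, \qquad Y_j d = \frac{|z|^2 y_j - x_j l}{d^3}.$$
Expanding the squares, the cross terms cancel between $X_j d$ and $Y_j d$, leaving
$$(X_j d)^2 + (Y_j d)^2 = \frac{(|z|^4+l^2)(x_j^2+y_j^2)}{d^6}.$$
Summing over $j=1,\dots,N$ gives $|\nabla_\HH d|^2 = d^{-6}(|z|^4+l^2)|z|^2 = |z|^2/d^2$, which is the first formula.

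For the second and third identities I would use the chain rule: for any smooth $F:(0,\infty)\to\R$,
$$\Deltah F(d) = F''(d)|\nabla_\HH d|^2 + F'(d)\,\Deltah d.$$
Applying this with $F(d)=d^{2-Q}$, which is (up to a constant) the fundamental solution $\gamma$ and therefore satisfies $\Deltah F(d)=0$ on $\HH^N\setminus\{0\}$, and dividing by $(2-Q)d^{-Q}$ yields
$$\Deltah d = \frac{Q-1}{d}|\nabla_\HH d|^2,$$
i.e.\ the second formula. Applying the chain rule again with $F(d)=d^{-\alpha}$ and substituting the expression just obtained for $\Deltah d$:
$$\Deltah d^{-\alpha} = \bigl[\alpha(\alpha+1) - \alpha(Q-1)\bigr]d^{-\alpha-2}|\nabla_\HH d|^2 = -\alpha(Q-2-\alpha)\,d^{-\alpha-2}|\nabla_\HH d|^2.$$
Plugging in $|\nabla_\HH d|^2 = |z|^2/d^2$ and $Q=2N+2$ gives the third formula with $C=\alpha(2N-\alpha)$.

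Finally, for local integrability, since $|z|^2/(|z|^4+l^2) \le 1/d^2$ the third identity gives $|\Deltah d^{-\alpha}|\le C\, d^{-\alpha-2}$ near the origin, while $\Deltah d^{-\alpha}$ is smooth away from $0$. Using the $D_\lambda$-homogeneity of $d$ and of the Lebesgue measure, $|B_r|=r^Q|B_1|$, and the standard polar decomposition on $\HH^N$, one has $\int_{B_r} d^{-\beta}\,dw <\infty$ iff $\beta<Q$. With $\beta=\alpha+2$ this is exactly $\alpha<2N$, i.e.\ $2N-\alpha>0$. Conversely, if $2N-\alpha\le 0$, the pointwise lower bound $|z|^2/(|z|^4+l^2)\gtrsim d^{-2}$ on the cylinder $\{|l|\le |z|^2\}$ shows the integral diverges, giving the ``only if''. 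The only mildly delicate step is the algebraic cancellation in Step 1; everything else is chain rule plus scaling.
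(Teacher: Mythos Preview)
Your proof is correct. The paper does not actually prove this lemma; it merely states that the identities follow from ``simple computations'' and cites \cite[Proposition~5.4.3]{BLU}, so your direct computation is exactly what was intended, with the nice shortcut of using the harmonicity of $d^{2-Q}$ away from the origin to obtain the second identity without computing $\Deltah d$ from scratch.
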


It is known that the left translation invariance of $\Deltah$ implies  that the semigroup $e^{t\Deltah}$ is given by a right convolution
\begin{equation}\label{(2.5j)}  e^{t\Deltah}f(w)=\int_{\HH^N}f(\tilde{w})p_t(\tilde{w}^{-1}\circ w)\,d\tilde{w},\quad t>0,\,w\in \HH^N,\end{equation} 
where $(w,t)\mapsto p_t(w)$ is the fundamental solution of $\left(\frac{\partial}{\partial t}+\Deltah \right)u=0$. Hence, by hypoellipticity,  $p_t(w)$ is a $C^\infty$ function on $\HH^N\times (0,\infty)$ and $\|p_t\|_1=1$. Moreover, $p_t$ satisfies the following Gaussian estimates, cf. \cite[Theorem IV.4.2 and Theorem IV.4.3]{VSC}.
\begin{theorem}\label{Gaussian-estim}
The heat kernel $p_t$ satisfies
$$Ct^{-\frac{Q}{2}}\exp\left(-c\frac{d^2(w)}{t}\right)\le p_t(w)\le C_\varepsilon t^{-\frac{Q}{2}}\exp\left(\frac{-d^2(w)}{4(1+\varepsilon)t}\right)$$
for some positive constants $C,\,c,\,C_\varepsilon$, any $\varepsilon>0,\,w\in \HH^N$ and $t>0$.
\end{theorem}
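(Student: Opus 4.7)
The plan is to reduce to the case $t=1$ by the natural parabolic rescaling and then to prove the two bounds by complementary semigroup techniques: Davies' weighted‑semigroup method for the upper bound and the parabolic Harnack inequality for the lower bound.

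First I would use the $D_\lambda$-homogeneity of $\Deltah$ of degree two, the left‑translation invariance and the transformation law $|D_\lambda(A)|=\lambda^Q|A|$ to derive the self‑similarity identity
$$p_t(w)=t^{-Q/2}p_1\bigl(D_{1/\sqrt t}(w)\bigr),\qquad t>0,\ w\in\HH^N.$$
Since $d$ is itself $D_\lambda$-homogeneous of degree one, both the prefactor $t^{-Q/2}$ and the Gaussian exponent $d^2(w)/t$ are invariant under this rescaling, and it suffices to prove both inequalities at $t=1$.

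For the upper bound I would first establish the diagonal estimate $p_1(0)\le C$, equivalent to the ultracontractive bound $\|e^{\Deltah}\|_{L^1\to L^\infty}\le C$, via the Folland--Stein/Nash inequality on $\HH^N$ with critical exponent fixed by the homogeneous dimension $Q$. To promote this to a genuine Gaussian tail I would apply Davies' conjugation trick: for a Lipschitz weight $\psi$ with $|\nabla_\HH\psi|_\infty\le 1$ and any $\alpha\in\R$, the conjugated semigroup $e^{\alpha\psi}e^{t\Deltah}e^{-\alpha\psi}$ still satisfies a Nash‑type inequality at the cost of an exponential factor $e^{(1+\varepsilon)\alpha^2 t}$. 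Choosing $\psi$ to be a smooth regularization of $\rho(\cdot,w)$ is admissible because Lemma \ref{lem-d} gives $|\nabla_\HH d|^2=|z|^2(|z|^4+l^2)^{-1/2}\le 1$, so that $\rho$ is $1$-Lipschitz in the horizontal gradient. Optimizing $\alpha$ then yields the bound $C_\varepsilon t^{-Q/2}\exp\bigl(-d^2(w)/(4(1+\varepsilon)t)\bigr)$.

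For the lower bound, the diagonal estimate $p_t(0)\ge c\,t^{-Q/2}$ follows from $\int p_t\,d\tilde w=1$, the upper bound just proved, and the fact that most of the mass of $p_t$ is concentrated in a $\rho$-ball of volume comparable to $t^{Q/2}$. To propagate this off the diagonal I would invoke the parabolic Harnack inequality for $\partial_t-\Deltah$, chaining Harnack balls along a horizontal curve from $(0,0)$ to $w$ of length comparable to $d(w)$; the number of links needed is of order $d^2(w)/t$, which produces exactly the exponential factor $\exp(-c\,d^2(w)/t)$. The main obstacle is the parabolic Harnack inequality itself: in the sub‑Riemannian setting it requires Jerison's weak $(2,2)$-Poincar\'e inequality on Carnot--Carath\'eodory balls, the volume‑doubling of such balls, and Moser iteration carried out with respect to the horizontal Sobolev embedding. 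Once these geometric ingredients are in place, the Grigor'yan--Saloff-Coste equivalence between doubling + Poincar\'e, parabolic Harnack and two‑sided Gaussian heat‑kernel bounds delivers the theorem.
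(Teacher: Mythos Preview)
The paper does not give a proof of this theorem at all: it is stated as a quotation from the literature, with the reference ``cf.\ \cite[Theorem IV.4.2 and Theorem IV.4.3]{VSC}'' immediately preceding the statement. So there is no argument in the paper to compare against.

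Your outline is a correct sketch of the standard proof and is, in fact, close to what the cited reference does. The parabolic self-similarity $p_t(w)=t^{-Q/2}p_1(D_{1/\sqrt t}(w))$ is exactly the consequence of $D_\lambda$-homogeneity of degree two together with the Jacobian $\lambda^Q$; the Nash/ultracontractivity route to the on-diagonal bound and Davies' exponential perturbation for the off-diagonal upper bound are the methods developed in \cite{VSC}; and the lower bound via the parabolic Harnack chain, resting on doubling plus the Jerison Poincar\'e inequality, is precisely the Grigor'yan--Saloff-Coste machinery you name. One small point worth making explicit: the gauge $d$ used in the statement is the Kor\'anyi norm, not the Carnot--Carath\'eodory distance, and Davies' method natively produces bounds in the latter. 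Your appeal to Lemma~\ref{lem-d} to get $|\nabla_\HH d|\le 1$ lets you run Davies directly with $\rho$, but for the lower bound via Harnack chains the length of the chain is governed by the CC distance; you then need the (well-known) equivalence of the two homogeneous norms to state the final estimate with $d$. With that caveat, nothing is missing.
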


\section{The main results}\label{main-results}
In this section we make the following hypotheses.
\begin{hyp}\label{hyp1}

\smallskip\noindent
\begin{itemize}
 \item $0\leq V\in L^1_{loc}(\HH^N);$
 \item $0\leq f\in L^1(\HH^N\times (0,T));$
 \item $0\le u_0\in L^1(\HH^N)$ (or more generally $u_0$ can be a positive finite Radon measure).
\end{itemize}
\end{hyp}

We consider the problem

\begin{equation}\label{eq:pb4}
 \left\{
 \begin{array}{ll}
 \dfrac{\partial u}{\partial t}=\Deltah u+V u+f & \text{ in }{\mathcal D}'  (\HH^N\times (0,T)), \\ 
  {\rm esslim}_{t\to 0^+}\displaystyle\int_{\HH^N}u(w,t)\psi(w)\,dw=\displaystyle\int_{\HH^N} u_0(w)\psi (w)\,dw&
  		\text{ for all }\psi\in {\mathcal D}(\HH^N), \\ 
  u\geq 0 &\text{ on } \HH^N\times(0,T), \\ \\
  Vu\in L^1_{loc}(\HH^N\times(0,T)).
 \end{array}
 \right.
\end{equation}
Here ${\mathcal D}(\HH^N)=C_c^{\infty}(\HH^N)$ (resp. $\mathcal{D}(\HH^N\times (0,T))=C_c^\infty(\HH^N\times (0,T))$)  with the usual topologies and ${\mathcal D}':={\mathcal D}'(\HH^N)$ (resp. $\mathcal{D}'_T:=\mathcal{D}'(\HH^N\times (0,T))$) is its dual space.
We also consider the approximating problem
\begin{equation}\label{eq:pb-approx}
 \left\{
 \begin{array}{ll}
 \dfrac{\partial u_n}{\partial t}=\Deltah u_n+V_n u_n+f_n & \text{ in }{\mathcal D}'_T, \\
 \\
  {\rm lim}_{t\to 0^+}\displaystyle\int_{\HH^N}u_n(w,t)\psi(w)\,dw=\displaystyle\int_{\HH^N} u_0(w)\psi (w)\,dw,&\mbox{ for all } \psi\in {\mathcal D}(\HH^N). \\
 \end{array}
 \right.
\end{equation}
Here
\begin{align*}
&f_n=\min\{f,n\},\\
&V_n\in L^{\infty},\, 0\leq V_n\leq V,\, V_n\uparrow V\,\,a.e.
\end{align*}
By the variation of parameters formula, \eqref{eq:pb-approx} has a unique bounded nonnegative solution obtained by solving the integral equation
\begin{equation}\label{eq:variation-parameters-formula}
 u_n(t)=e^{t\Deltah}u_0+\int_0^te^{(t-s)\Deltah}V_n(\cdot)u_n(s)\,ds+\int_0^te^{(t-s)\Deltah}f_n(s)\,ds,
\end{equation}
where $\left( e^{t\Deltah} \right)_{t\geq 0}$ is the semigroup generated by $\Deltah$ on $\HH^N$.
We note that $V_n$ is a bounded multiplication operator on $L^p\left( \HH^N \right)$ for all $p\in [1,+\infty)$.
Since $\{V_n\}$ is an increasing sequence, clearly $\{u_n\}$ is an increasing sequence, as well.\\
\begin{proposition}\label{pr:1}
Suppose there is a $(w_0,t_0)\in \HH^N\times(0,T)$ with $\lim_{n\to \infty}u_n(w_0,t_0)<\infty$. Then \eqref{eq:pb4}
has a nonnegative solution on $\HH^N\times(0,T_0)$ for all $0<T_0<t_0$ given by
\begin{equation}
u(w,t)=\lim_{n\to \infty}u_n(w,t)\quad a.e. \text{ in }\HH^N\times (0,T_0).
\end{equation}
Moreover, if \eqref{eq:pb4} has a nonnegative solution in $\HH^N\times(0,T)$, then $\lim_{n\to \infty}u_n(w,t)<\infty$ a.e. in $\HH^N\times(0,T)$.
\end{proposition}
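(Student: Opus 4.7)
The plan is to deduce the proposition from the variation-of-parameters formula \eqref{eq:variation-parameters-formula} combined with the two-sided Gaussian estimates of Theorem \ref{Gaussian-estim}, using the monotonicity of $\{u_n\}$ to pass to the limit. The key point is to convert pointwise finiteness at one space--time point into a local $L^1$ bound via the strict positivity of the heat kernel.

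For the direct implication, I would use the semigroup property to rewrite \eqref{eq:variation-parameters-formula} in two-time Duhamel form evaluated at $(w_0, t_0)$: for every $t \in (0, t_0)$,
\[
u_n(w_0, t_0) = \int_{\HH^N}\! p_{t_0 - t}(\tilde w^{-1}\!\circ w_0)\, u_n(\tilde w, t)\, d\tilde w + \int_t^{t_0}\!\!\int_{\HH^N}\! p_{t_0 - s}(\tilde w^{-1}\!\circ w_0)\, (V_n u_n + f_n)(\tilde w, s)\, d\tilde w\, ds.
\]
Both nonnegative terms are bounded by $L := \sup_n u_n(w_0, t_0) < \infty$. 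For compact $K \subset \HH^N$ and $T_0 \in (0, t_0)$, the lower Gaussian bound of Theorem \ref{Gaussian-estim} yields a constant $c_{K, T_0} > 0$ with $p_{t_0 - t}(\tilde w^{-1}\!\circ w_0) \geq c_{K, T_0}$ on $K \times [0, T_0]$, whence
\[
\sup_n \sup_{t \in [0, T_0]} \int_K u_n(\tilde w, t)\, d\tilde w \leq L/c_{K, T_0}, \qquad \sup_n \int_0^{T_0}\!\!\int_K V_n u_n\, d\tilde w\, ds \leq L/c_{K, T_0}.
\]
By monotone convergence, $u := \lim_n u_n$ lies in $L^\infty([0, T_0]; L^1_{\mathrm{loc}}(\HH^N))$ and $V u \in L^1_{\mathrm{loc}}(\HH^N \times (0, T_0))$; in particular $u$ is finite a.e. Passing to the limit in \eqref{eq:variation-parameters-formula} term by term then gives the mild identity
\[
u(w, t) = e^{t\Deltah} u_0(w) + \int_0^t e^{(t-s)\Deltah} (V u + f)(w, s)\, ds,
\]
from which the distributional equation of \eqref{eq:pb4} follows immediately. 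The initial condition is checked by testing this identity against $\psi \in \mathcal{D}(\HH^N)$: the first term converges to $\int u_0 \psi$ by strong continuity of $e^{t\Deltah}$ on $C_0(\HH^N)$, while the Duhamel contribution vanishes as $t \to 0^+$ using the local integrability of $V u + f$ together with the Gaussian upper bound of $p_t$ to control the test function at infinity.

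For the converse, if $u$ solves \eqref{eq:pb4}, subtracting the mild forms of $u$ and $u_n$ gives
\[
u - u_n \geq \int_0^t e^{(t-s)\Deltah} V_n (u - u_n)\, ds,
\]
since $V u \geq V_n u \geq V_n u_n$ and $f \geq f_n$; iterating (equivalently, applying the Phillips perturbation series for the bounded potential $V_n$) yields $u - u_n \geq 0$, so $\lim_n u_n \leq u < \infty$ a.e. The principal technical subtlety is the verification of the initial condition for the limit $u$: because monotone convergence only gives lower semicontinuity in $t$, one cannot simply transfer the $\mathrm{esslim}$ condition from each $u_n$; the mild representation combined with the Gaussian upper bound of Theorem \ref{Gaussian-estim} provides the required control of the Duhamel integral against compactly supported test functions.
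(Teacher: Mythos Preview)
Your approach is essentially the same as the paper's: extract local $L^1$ bounds on $u_n$ and $V_n u_n$ from the Duhamel representation via the Gaussian lower bound of Theorem~\ref{Gaussian-estim}, then pass to the limit by monotone convergence. The only noteworthy difference is that in place of your two-time Duhamel identity the paper uses the substitution $U_n=e^t u_n$, which satisfies the equation with potential $V_n+1$ and hence yields both the bound on $\int u_n$ and on $\int V_n u_n$ from a single Duhamel estimate starting at $t=0$; the paper is also terser on the initial condition and on the comparison $u_n\le u$, which you justify more carefully.
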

\begin{proof}
Clearly, if $u\geq 0$ is a solution of \eqref{eq:pb4}, then $u_n\leq u$ for all $n$, so
$\lim_{n\to \infty}u_n(w,t)\leq u(w,t)$ a.e. in $\HH^N\times (0,T)$. This establishes the last part of the proposition.

For the main part, we start by considering
\[
 U_n=e^tu_n,\quad t>0.
\]
Then
\[
 \frac{\partial U_n}{\partial t}=\Deltah U_n+(V_n+1)U_n+e^tf_n,
\]
and, using the variation of parameters formula, we obtain
\begin{equation}\label{eq:12}
 e^{t_0}u_n(w_0,t_0)\geq \int_0^{t_0}e^s(e^{(t_0-s)\Deltah}(V_n+1)u_n(s))(w_0)\, ds,\quad (w_0,t_0)\in \HH^N\times (0,T),
\end{equation}
since $e^{\Deltah}u_0\geq 0$ and $f_n\geq 0.$
On the other hand, it follows from the Gaussian estimates in Theorem \ref{Gaussian-estim} that
\begin{eqnarray*}
& & \int_0^{t_0}e^s(e^{(t_0-s)\Deltah}(V_n+1)u_n(s))(w_0)\, ds\\
&\ge & C\int_0^{t_0}\int_{\HH^N}e^s(V_n(\tilde{w})+1)u_n(\tilde{w},s)(t_0-s)^{-\frac{Q}{2}}\exp\left(-c\frac{d^2(\tilde{w}^{-1}\circ w_0)}{t_0-s}\right)d\tilde{w} ds.
\end{eqnarray*}
So if $\Omega '\subset \subset \HH^N $ and $\varepsilon \in (0,T)$, it follows that,
for $(w_0,t_0)\in \HH^N\times (0,T)$, there is $c_0>0$ such that
\begin{equation}\label{eq:14}
c_0\int_0^{t_0-\varepsilon}\int _{\Omega '}V_n(\tilde w)u_n(\tilde w,s)\,d\tilde w ds+
  c_0\int_0^{t_0-\varepsilon}\int _{\Omega '}u_n(\tilde w,s)\,d\tilde w ds\leq e^{t_0}u_n(w_0,t_0).
\end{equation}

By our hypothesis $u_n$ increases, moreover
the right hand side of \eqref{eq:14} is clearly bounded, so by the monotone convergence theorem, $u_n\uparrow u$
and $V_nu_n\uparrow Vu$ in $L^1\left( \Omega '\times (0,t_0-\varepsilon) \right)$
and $u$ is a solution of \eqref{eq:pb4} in the sense of distributions.
\end{proof}
\begin{remark}
Notice that the solution of \eqref{eq:pb4} satisfies the integral equation

\begin{align*}
& u(w,t)=e^{t\Deltah}u_0(w)+\int_0^t e^{(t-s)\Deltah}V(w)u(w,s)\,ds\\
&\quad +\int_0^t e^{(t-s)\Deltah}f(w,s)\,d s,\qquad (w,t)\in \HH^N\times (0,t_0).
\end{align*}
Also, since $u_n(w,t)\to u(w,t)<\infty$ a.e. on $\HH^N\times (0,t_0)$, we get, using \eqref{eq:12},
$s\longmapsto (e^{(t_0-s)\Deltah}V(\cdot)u(\cdot,s))(w)\in L^1(0,t_0)$ for a.e. $w\in \HH^N$.
\end{remark}

The inverse square potential  in the Euclidean case of $x\in\R^N$ is $V^*_c(x)= \dfrac{c}{|x|^2}$ and the critical constant is the best constant 
$$C_*(N) =\left( \frac{N-2}{2}\right)^2$$ in Hardy's inequality 
$$\int_{\R^N} |\nabla u(x)|^2\ dx \geq C_*(N) \int_{\R^N} \dfrac{|u(x)|^2}{|x|^2}\ dx$$
for $u\in C^1_{c}(\R^N)$  if $N\geq 3$ and for $u\in  C^1_{c}(\R^N\setminus \{0\})$ for $N=1,2.$\\ The multiplication operator $V^*_c$ and the Laplacian both have the same scaling property, namely
$$U(\lambda)^{-1} \mathcal{L} U(\lambda) = \lambda^2  \mathcal{L}$$ for $ \mathcal{L}=V_c^*$ or $ \mathcal{L}=\varDelta$, where $U(\lambda) f(x) = \lambda^{\frac{N}{2}}  f(\lambda x)$,  for $\lambda >0,$ defines a unitary operator on  $L^2(\R^N)$.

In the case of the Heisenberg group $\HH^N$, the corresponding critical potential is $$\tilde V^*_c(w)= \dfrac{c|z|^2}{|z|^2+l^2}$$ for $w=(x,y,l)=(z,l)\in\HH^N$ and $c>0.$ The corresponding Hardy's inequality, due to Garofalo and Lanconelli (\cite{garofalo_lanconelli_1990}, see also \cite{BLU}, \cite{GZ1}), is 
$$\int_{\HH^N} |\nabla_{\HH^N} u(w)|^2\ dw \geq C^*(N) \int_{\HH^N} \tilde V^*_1(w) |u(w)|^2\ dw,$$ with the best constant being $C^*(N)=N^2$, for all $N\geq 1.$ Both $\varDelta_{\HH^N} $ and multiplication by $\tilde V^*_c$ scale in the same way. Let 
$$\tilde U (\lambda) f (z,l)=\lambda^{N+1} f (\lambda z, \lambda^2 l);$$ 
$\tilde U (\lambda)$ is unitary on $L^2(\HH^N)$ for all $\lambda>0$ and 
$$\tilde U (\lambda)^{-1} \mathcal{L} \tilde U (\lambda)= \lambda^2 \mathcal{L}$$ 
for   $\mathcal{L}=\varDelta_{\HH^N}$ or $\mathcal{L}=\tilde V^*_c$, and all $\lambda>0.$

As in the Euclidean case, the critical potential is $C^* (N) \tilde V^*_1= \tilde V^*_{C^*(N)}$ near the origin. That is, by localizing to  the unit ball 
$B_1$ in $\HH^N$ (or to $B_\rho$ for any $\rho>0$), let

\begin{equation}\label{eq:pot-crit}
V^*_0(w)=\left\{
\begin{array}{ll}
\dfrac{c|z|^2}{|z|^4+l^2}&w \in B_1\\ \\
0&w\in \HH^N\setminus B_1,
\end{array}
\right.
\end{equation}
where $B_1$ is the unit ball centered at the origin in $\HH^N$ with respect to the metric $\rho(w,w')=d(w'^{-1}\circ w),\,w,\,w'\in \HH^N$.

Finally, notice that the smallest root of
$$
\alpha(Q-2-\alpha)=c
$$
is given by
\begin{align*}
\alpha=\frac{Q-2}{2}-\sqrt{\left( \frac{Q-2}{2} \right)^2-c}=N-\sqrt{N^2-c},
\end{align*}
when $c\leq C^*(N)$.

The following theorem is the main result of this paper. It is an extension of \cite[Theorem 1.1]{GZ1} and a generalization 
of \cite{BG84}.

\begin{theorem}\label{th:1}\mbox{}\begin{itemize}
 \item [(i)] Let $0\leq c\leq C^*(N)$ and let $V\geq 0$ be a measurable potential satisfying $V\in L^{\infty}(\HH^N\setminus B_1)$. Let $0\le f \in L^1(\HH^N\times (0,T))$. 
 If $V\leq V^*_0$ in $B_1$, then \eqref{eq:pb4} has a solution if
 \begin{equation}\label{eq:cond-ext}
  \int_{\HH^N} d(w)^{-\alpha}u_0(w)\,dw<\infty\,,\quad \int_0^T\int_{\HH^N} f(w,s)d(w)^{-\alpha}dwds<\infty ,
 \end{equation}
where $\alpha$ is the smallest root of $\alpha(2N-\alpha)=c$.
If $V\geq V^*_0$ in $B_1$, and if \eqref{eq:pb4} has a solution u, then
\[
 \int_{\HH^N} d(w)^{-\alpha}u_0(w)\,dw<\infty,\quad \int_0^{T-\varepsilon }\int_{\Omega '}f(w,s)d(w)^{-\alpha}dwds<\infty
\]
for each $\varepsilon \in (0,T)$ and each $\Omega'\subset\subset \HH^N$ with $\alpha$ as above.
If either $u_0\not\equiv 0$ or $f\not\equiv 0$ in $\HH^N\times (0,\varepsilon)$ for each $\varepsilon\in (0,T)$, then
given $\Omega '\subset\subset \HH^N$ with $0\in \Omega '$, there is a constant $C=C(\varepsilon, \Omega')>0$ such that
\begin{equation}\label{eq:t3}
 u(w,t)\geq \frac{C}{d^\alpha(\omega)} ,\quad  (w,t)\in \Omega'\times [\varepsilon,T].
\end{equation}

\item[(ii)] If $c>C^*(N)$, $V\geq V^*_0$ and either $u_0 \not\equiv 0$ or $f\not\equiv 0$, then \eqref{eq:pb4} does not have a positive solution. Moreover, we have instantaneous blowup. 

\end{itemize}

\end{theorem}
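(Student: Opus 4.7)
The plan splits into three parts, with (ii) obtained as a contradiction using (i) as its starting point.

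\emph{Existence in (i).} Proposition \ref{pr:1} reduces the existence part to producing a uniform (in $n$) bound $\sup_n u_n(w_0, t_0) < \infty$ at a single point, or equivalently a uniform weighted $L^1$-bound. Lemma \ref{lem-d} suggests the weight $h(w) := d(w)^{-\alpha}$: for $\alpha$ the smallest root of $\alpha(2N - \alpha) = c$, one has $-\Deltah h = c V^*_1 h$ classically on $\HH^N \setminus \{0\}$, and this extends distributionally to $\HH^N$ since $2N - \alpha > 0$. I would multiply the equation for $u_n$ by a mollified version of $h$, integrate over $\HH^N$, and use $V_n \le V \le V^*_0 = c V^*_1 \one_{B_1}$ in $B_1$ together with $V$ bounded outside $B_1$. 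The key cancellation $-\int u_n \Deltah h = c \int u_n V^*_1 h$ absorbs the potential term, leaving
\begin{equation*}
\frac{d}{dt}\int_{\HH^N} u_n(w,t)\, h(w)\, dw \le C\int_{\HH^N} u_n h\, dw + \int_{\HH^N} f_n h\, dw.
\end{equation*}
Hypothesis \eqref{eq:cond-ext} and Gronwall then give a uniform bound in $n$, so that Proposition \ref{pr:1} yields the solution $u = \lim u_n$; the technical handling of the mollification (to justify integration by parts with the singular weight $h$) is exactly the content of the appendix lemmas.

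\emph{Lower bound in (i).} Since $u_0 \not\equiv 0$ or $f \not\equiv 0$, the Gaussian lower bound of Theorem \ref{Gaussian-estim} applied to the integral equation \eqref{eq:variation-parameters-formula} first gives $u(w, t) \ge c_0 > 0$ on a compact cylinder. Iterating the integral equation with $V \ge V^*_0$ and the formal identity $(-\Deltah)^{-1}(V^*_1 d^{-\alpha}) = d^{-\alpha}/(\alpha(2N-\alpha))$ from Lemma \ref{lem-d} stabilizes the singularity at the ground-state power $d^{-\alpha}$, producing \eqref{eq:t3}.

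\emph{Instantaneous blowup in (ii).} I would argue by contradiction. Suppose $\lim_n u_n(w_0, t_0) < \infty$ for some $(w_0, t_0)$. By Proposition \ref{pr:1} the limit $u = \lim u_n$ is a solution of \eqref{eq:pb4} on $\HH^N \times (0, t_0)$. Since $V \ge V^*_0$ with $c > C^*(N) = N^2$, in particular $V$ dominates the critical potential with coefficient $C^*(N)$, so part (i) applied at the critical exponent $\alpha = N$ yields
\begin{equation*}
u(w, t) \ge A_0\, d(w)^{-N} \quad \text{on } B_\rho \times [\varepsilon_0, t_0 - \varepsilon_0].
\end{equation*}
Next I iterate: plugging this into the integral equation and using the identity $V^*_1 d^{-N} = -\Deltah d^{-N}/N^2$ from Lemma \ref{lem-d} together with the telescoping $\int_0^\tau e^{(\tau-s)\Deltah}(-\Deltah g)\, ds = g - e^{\tau\Deltah} g$ yields, on a slightly smaller ball,
\begin{equation*}
u(w, t) \ge \frac{c}{N^2} A_k\, d(w)^{-N} - M,
\end{equation*}
with a uniformly bounded error $M$ coming from the heat semigroup applied to $d^{-N}$ and from boundary corrections at $\partial B_1$. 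The recursion $A_{k+1} = (c/N^2) A_k - M$ diverges once $c/N^2 > 1$ (provided $A_0$ exceeds the fixed point $M/(c/N^2 - 1)$), forcing $A_k \to +\infty$ and contradicting $u(w_0, t_0) < \infty$.

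The main obstacle is orchestrating the iteration in (ii) so that the super-critical factor $c/N^2 > 1$ dominates the bounded error $M$ at each step; if the starting amplitude $A_0$ falls below the recursion's fixed point, one must first bootstrap using the scaling covariance of $\Deltah$ and $V^*_1$ under the dilations $(D_\lambda)_{\lambda > 0}$ to amplify $A_0$ before triggering geometric divergence.
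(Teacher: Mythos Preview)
Your existence argument in (i) is the paper's: multiply by $\phi=d^{-\alpha}$ and use $-\Deltah\phi=cV^*_1\phi$ to cancel the potential term. The paper first reduces to $V\le V^*_0$ everywhere via Proposition~\ref{pr:Bddper} and regularizes with convex functions $\rho_\varepsilon(r)=\sqrt{r^2+\varepsilon^2}-\varepsilon^2$ rather than mollifying the weight, but the substance is identical.

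The real gap is the lower bound \eqref{eq:t3}. The identity $(-\Deltah)^{-1}(V^*_1 d^{-\beta})=d^{-\beta}/(\beta(2N-\beta))$ \emph{preserves} the exponent $\beta$, so iterating Duhamel from a constant lower bound $u\ge c_0=c_0\,d^{0}$ never produces a $d^{-\alpha}$ singularity in finitely many steps, and you give no mechanism to control the infinite iteration or to identify its precise limiting exponent. The paper's Lemma~\ref{technical-lemma} is the substantive work: it passes to an auxiliary Dirichlet problem on $B_{r_0}$, obtains an \emph{upper} bound $v_n\le C_0\phi$ via a weighted $L^p$ energy argument, and then runs a Moser iteration (convex test functions $\mathcal I(g_n)$ and the weighted Sobolev inequality of Lemma~\ref{lem:GGKRT}) to convert this into $v/\phi\ge C_1>0$. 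You also omit the necessary condition in (i) entirely; the paper derives it by duality, solving the equation with Dirac initial data $\phi(w)^{-1}\delta_w$ to obtain \eqref{eq:4.3} and then applying \eqref{eq:t3} to \emph{that} auxiliary solution.

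For (ii) your recursion is more fragile than necessary: the domain shrinks at each step and, as you acknowledge, the starting amplitude $A_0$ may sit below the fixed point $M/(c/N^2-1)$, requiring an extra scaling argument you only gesture at. The paper bypasses this entirely once the necessary condition in (i) is available. Write $\partial_t u-\Deltah u=C^*(N)V^*_1 u+(c-C^*(N))V^*_1 u$ and regard the last term as a source; the necessary condition at the critical exponent $\alpha=N$ forces $V^*_1\,u\,d^{-N}\in L^1_{\mathrm{loc}}$, but the lower bound $u\ge C\,d^{-N}$ from \eqref{eq:t3} makes the integrand $\gtrsim V^*_1\,d^{-2N}$, which is not locally integrable near the origin. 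This gives the contradiction in one line, with no iteration.
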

\begin{proof}

\smallskip\noindent
\\
(i): 
We can consider $V\leq V^*_0\hbox{\ in } \HH^N$. Otherwise consider $V=\tilde V+B=:V\chi_{B_1}+V\chi_{B^c_1}$ with
$\tilde V\leq V^*_0$ in $\HH^N$, $B\in L^\infty(\HH^N)$ and use Proposition \ref{pr:Bddper}  in the  Appendix.

Let $\phi(w):=d(w)^{-\alpha}$, and choose a convex function $\rho\in C^2(\R)$ with $\rho(0)=\rho'(0)=0.$
Next, multiply \eqref{eq:pb-approx} by $\rho'(u_n)\phi$ and integrate over $\HH^N\times [\delta,t)$ for $0<\delta<t<T.$ Then, letting $\int$ denote $\int_{\HH^N}$, 
\begin{align*}
&\int_\delta^t\int\frac{\partial u_n}{\partial s}\rho'(u_n)\phi = \int_\delta^t \int \Deltah u_n \rho'(u_n)\phi
 +\int_\delta^t \int (V_nu_n+f_n)\rho'(u_n)\phi ,
\end{align*}
and so
\begin{align*}
 &\int \int_\delta^t\frac{\partial }{\partial s}(\rho(u_n))\phi=
      -\int_\delta^t\int \nablah u_n\cdot\nablah (\rho'(u_n)\phi )
 +\int_\delta^t \int(V_nu_n+f_n)\rho'(u_n)\phi .
\end{align*}
Then,
\begin{align*}
 \int \rho (u_n(t))\phi &
    =-\int_\delta^t\int \rho''(u_n)|\nablah u_n |^2\phi+(\nablah u_n\cdot \nablah \phi)\rho'(u_n)\\
 &\qquad+\int_\delta^t\int(V_nu_n+f_n)\rho'(u_n)\phi +\int \rho(u_n(\delta))\phi \\
   &=\int_\delta^t\int-\rho''(u_n)|\nablah u_n|^2\phi +\int_{\delta}^t \int \rho(u_n)\Deltah \phi \\
  &\qquad +\int_\delta^t \int (V_nu_n+f_n)\rho'(u_n)\phi 
  \, +\int \rho(u_n(\delta))\phi ,
\end{align*}
since $\rho'(u_n)\nablah u_n=\nablah(\rho(u_n)).$
Hence
\begin{equation}\label{eq:15}
 \int \rho (u_n(t))\phi \leq \int_\delta^t\int\rho(u_n)\Deltah \phi +\int_\delta^t\int(V_nu_n+f_n)\rho'(u_n)\phi
  +\int \rho(u_n(\delta))\phi .
\end{equation}
Replace $\rho$ in \eqref{eq:15} with the convex function $\rho_\varepsilon (r)=\sqrt{r^2+\varepsilon^2}-\varepsilon^2,\,r\ge 0,$ and let $\varepsilon \to 0$ to obtain,
by the monotone convergence theorem,
\begin{equation}\label{eq:16}
\int u_n(t) \phi \leq\int_\delta^t\int u_n\Deltah \phi +\int_\delta^t\int (V_nu_n+f_n)\phi +\int  u_n(\delta)\phi .
\end{equation}
Next we want to let $\delta\to 0.$ Notice that
\begin{equation}\label{eq:17}
 e^{\delta\Deltah}u_0 \leq u_n(\delta)=e^{\delta (\Deltah +V_n)}u_0+\int_0^\delta e^{(\delta-s)(\Deltah+V_n)}f_n(s)ds.
\end{equation}
Since  $\|V_n\|_{\infty}=:c_n<\infty ,$ it follows from the Daletskii--Trotter product  formula that
\begin{align*}
&e^{\delta(\Deltah+V_n)}u_0=\lim_{m\to \infty }\left( e^{\delta \Deltah/m}e^{\frac{\delta}{m}V_n} \right)^mu_0\\
&\quad \leq e^{\delta c_n }e^{\delta\Deltah}u_0,
\end{align*}
by the positivity of the semigroup $\{e^{\delta \Deltah}\}.$ So \eqref{eq:17} becomes
\[
 e^{\delta \Deltah }u_0
    \leq u_n(\delta)\leq e^{\delta c_n}e^{\delta\Deltah}u_0
	+\int_0^{\delta}e^{c_n (\delta-s)}e^{(\delta-s)\Deltah}f_n(s)ds,
\]
and by the contractivity of $e^{t\Deltah}$  we have
\begin{align*}
\int \left(e^{\delta \Deltah}u_0\right)\phi  \leq \int u_n(\delta)\phi
    \leq e^{\delta c_n}\int \left(e^{\delta \Deltah} u_0\right)\phi +e^{\delta c_n }\|f_n\|_{\infty}\delta \int \phi  .
\end{align*}
The strong continuity of the semigroup implies
\[
\lim_{\delta \to 0}\int \left( e^{\delta \Deltah}u_0 \right)\phi= \int \phi u_0.
\]

\noindent
Thus we have shown that
\[
 \lim_{\delta \to 0}\int u_n(\delta)\phi= \int \phi u_0.
\]
Now let $\delta\to 0$ in $\eqref{eq:16}$, using \eqref{eq:11-2}, to deduce
\begin{align*}
&\int u_n(t)\phi\leq \int_0^t\int u_n \Deltah \phi+\int_0^t\int V_nu_n\phi+\int_0^t\int f_n\phi+\int u_0\phi \\
&\quad =\int_0^t\int \left( -c\frac{|z|^2}{|z|^4+l^2} +V_n\right)u_n \phi+\int_0^t\int f_n\phi+\int u_0\phi  \\ 
&\qquad \leq \int_0^t \int f_n\phi+\int u_0\phi ,
\end{align*}
since $V_n\leq V^*_0$. It follows that if $\int _0^t\int f\phi+\int \phi u_0<\infty$,
then, by Proposition \ref{pr:1}, $u_n(w,t)\uparrow u(w,t) (<+\infty)$ as $n\to \infty$ for all $t\in (0,T]$ and a.e. $w\in \HH^N$,
which gives the first part of (i) of  the theorem.

Let us now prove the second part of (i).
The inequality \eqref{eq:t3} is proved in Lemma \ref{technical-lemma} below. On the other hand, by the first part of (i) we have that, for each $w\in \HH^N\setminus \{ 0\},$ \eqref{eq:pb4}
has a solution with $u_0=\phi^{-1}(w)\delta_w$, $f\equiv 0$ and $V=V^*_0$, where $\delta_w$ denotes the Dirac measure at $w$.
We denote this solution by $u_w$.
We define
\[
 h_w(\tilde w,t)=u_w(\tilde w,t)\phi(\tilde w)^{-1},\quad (\tilde w ,t)\in \HH^N\times (0,T],
\]
and set
$ h=u\phi^{-1}$
and $h_n=u_n\phi^{-1}$
with $u$ (respectively $u_n$) the solution of \eqref{eq:pb4} (respectively \eqref{eq:pb-approx}) obtained by Proposition \ref{pr:1}.

We now prove 
\begin{equation}\label{eq:4.3}
h(w,t)  \geq  \int_{\HH^N} h_w(\tilde w,t)\phi(\tilde w)u_0(\tilde w)\,d\tilde w
  +\int_0^t \int_{\HH^N} h_w(\tilde w,t-s)f(\tilde w,s)\phi(\tilde w)\,d\tilde w \,ds
\end{equation}
holds for $w\in \HH^N\setminus \{ 0\}$ and $t\in (0,T]$.

To this end
let $u_n$ be the solution of \eqref{eq:pb-approx},
and let $v_n$ be the solution of 
\[\left\{
 \begin{array}{l}
  \dfrac{\partial v_n}{\partial t}= \Deltah v_n+V_{0,n}v_n\,,\\ \\
  v_n(0)=\phi(w)^{-1}\delta_w,
 \end{array}
 \right.
\]
 where $V_{0,n} =\min \left\{ V^*_0,n \right\}$.
Note that by the above construction, $v_n(\tilde w,t)\uparrow u_w(\tilde w,t)$ as $n\to \infty$ for all $t\in (0,T]$ and a.e. $\tilde w\in \HH^N$.

On the other hand, we have
\begin{align*}
&\frac{\partial }{\partial s}\int_{\HH^N} u_n(\tilde w,s)v_n(\tilde w,t-s)\,d\tilde w=
  \int_{\HH^N} \left[ \frac{\partial u_n}{\partial s}(\tilde w,s)v_n(\tilde w,t-s)-u_n(\tilde w,s)\frac{\partial v_n}{\partial s}(\tilde w,t-s)  \right]d\tilde w\\
&\quad =\int_{\HH^N} \left[  v_n(\tilde w,t-s)\Deltah u_n(\tilde w,s)-\Deltah v_n(\tilde w,t-s)u_n(\tilde w,s)+f_n(\tilde w,s)v_n(\tilde w,t-s)
      \right]d\tilde w\\
&\qquad +\int_{\HH^N} \left( V_n-V_{0,n} \right)u_n(\tilde w,s)v_n(\tilde w,t-s)\,d\tilde w\\      
&\quad  \geq \int_{\HH^N} f_n(\tilde w,s)v_n(\tilde w,t-s)\,d\tilde w.
\end{align*}
Hence integrating from $\delta$ to $t-\delta$ yields

\begin{equation}\label{eq:18} \begin{array}{lll} 
 \displaystyle\int_{\HH^N}  u_n(\tilde w,t-\delta)v_n(\tilde w,\delta)\,d\tilde w \geq \\  \\ \displaystyle\int_\delta^{t-\delta} \displaystyle\int_{\HH^N} f_n(\tilde w,s)v_n(\tilde w,t-s)\,d\tilde w \,ds
    +\int_{\HH^N} u_n(\tilde w, \delta )v_n(\tilde w, t-\delta)\,d\tilde w.\end{array}
\end{equation}

Letting $\delta \to 0$ in \eqref{eq:18} and noting that, as $\delta \to 0$,
$u_n(t-\delta)\to u_n(t)$ weakly,
$v_n(t-\delta)\to v_n(t)$ weakly, $u_n(\delta)\to u_0$ weakly and
$v_n(\delta)\to\phi(w)^{-1}\delta_w $ weakly,
we get
\begin{equation}\label{eq:19}
u_n(w,t)\phi^{-1}(w)\geq \int_0^t\int_{\HH^N} f_n(\tilde w,s)v_n(\tilde w,t-s)\,d\tilde w \,ds+\int_{\HH^N} v_n(\tilde w,t)u_0(\tilde w)\,d\tilde w.
\end{equation}
Letting $n\to \infty$ in \eqref{eq:19} and noting that $u_n(w,t)\uparrow u(w,t)=h(w,t)\phi(w)$
and $v_n(\tilde w,t)\uparrow u_w(\tilde w,t)=h_w(\tilde w,t)\phi (\tilde w)$, we obtain \eqref{eq:4.3}.

Applying \eqref{eq:t3} to $u_w$ for a fixed $w\in \HH^N\setminus \{0\}$, we obtain that there exists a constant $C>0$ such that
\[
 h_w( \tilde w,t)\geq C \text{ for  }(\tilde w ,t)\in \Omega'\times [\varepsilon,T].
\]

It follows from \eqref{eq:4.3} that
\[
 h(w,t)\geq C\int_{\Omega'}\phi(\tilde w) u_0(\tilde w)\,d\tilde w+C\int_0^{T-\varepsilon}\int_{\Omega'}f(\tilde w,s)\phi(\tilde w)\,d\tilde w ds.
\]
If a solution $u$ exists, we must have $h(w,t)<\infty$ for a.e. $w\in \HH^N$ and all $t\in (0,T]$.
Thus, necessary conditions for the existence of a solution $u$ are
\[
 \int_{\Omega'}\phi (w)u_0(w)\,dw<\infty,\text{ and }  \int_0^{T-\varepsilon}\int_{\Omega'}f(w,s)\phi(w)\,dw\,ds<\infty .
\]
This completes  the proof of (i).
\\

\noindent (ii):
Let $c>C^*(N)$ and let $u\not \equiv 0$ a solution of \eqref{eq:pb4}. Then,
\[
 \frac{\partial u}{\partial t}-\Deltah u=C^*(N)\frac{|z|^2}{|z|^4+l^2}u+(c-C^*(N))\frac{|z|^2}{|z|^4+l^2}u.
\]
From part $(i)$, a solution exists only if
\[
 (c-C^*(N))\frac{|z|^2}{|z|^4+l^2}u\phi\in L^1(\Omega'\times (0,T-\varepsilon)),
\]
for $\Omega '$ any compact set in $\HH^N$ and $\varepsilon>0$.
(Here we have assumed $0\in \Omega'$.)\\
But by the preceding proof (see \eqref{eq:t3} with $\alpha=N$), we have
$$u\geq C_\varepsilon d^{-N}(w),$$
in $\Omega'\times [\varepsilon,T)$, and so we would need $\frac{|z|^2}{|z|^4+l^2}d^{-N}\in L^1(\Omega')$, which is false.

\end{proof}

\mbox{}

\begin{lemma}\label{technical-lemma}
Assume  $0\le c\le C^*(N),\,\alpha$ the smallest root of $\alpha(2N-\alpha)=c$, and $0\le V\in L^\infty(\HH^N\setminus B_1)$ with $V\ge V^*_0$ in $B_1$. If $u$ is a solution of \eqref{eq:pb4} with $u_0\not\equiv 0$ in $\HH^N$, then given $\Omega'\subset \subset \HH^N,\,\varepsilon \in (0,T)$, there is $C=C(\varepsilon ,\Omega')>0$ such that
\eqref{eq:t3} holds.
\end{lemma}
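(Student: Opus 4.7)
My plan is to combine the Gaussian lower bound for the free heat semigroup (Theorem \ref{Gaussian-estim}) with the fact that $\phi(w)=d(w)^{-\alpha}$ is a time-independent solution of the critical equation, as follows from Lemma \ref{lem-d} together with the relation $\alpha(2N-\alpha)=c$. First, by Proposition \ref{pr:1}, $u=\lim_{n\to\infty}u_n$ where $u_n$ solves \eqref{eq:pb-approx} via \eqref{eq:variation-parameters-formula}. Positivity of the heat semigroup gives $u_n(w,t)\geq(e^{t\Deltah}u_0)(w)$, and hence the same lower bound for $u$. Since $u_0\not\equiv 0$, the Gaussian estimate of Theorem \ref{Gaussian-estim} produces a constant $c_0=c_0(\varepsilon,\Omega')>0$ with $u(w,t)\geq c_0$ for every $(w,t)\in\overline{\Omega'}\times[\varepsilon/2,T]$. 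Since replacing $V$ by the smaller $V^*_0$ only decreases the approximating solutions (and hence $u$), it suffices to prove \eqref{eq:t3} under the additional assumption $V=V^*_0$ and $f\equiv 0$.

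Next, I would exploit the stationary solution $\phi=d^{-\alpha}$. Lemma \ref{lem-d} (with $\alpha(2N-\alpha)=c$) gives
$$-\Deltah\phi(w)=c\,\frac{|z|^2}{|z|^4+l^2}\phi(w)=V^*_0(w)\phi(w),\quad w\in B_1\setminus\{0\},$$
so $\lambda\phi$ is a time-independent weak solution of $\partial_t v=\Deltah v+V^*_0 v$ on $B_1\setminus\{0\}$ for every $\lambda>0$, and $u-\lambda\phi$ is a supersolution of the same equation there. My plan is to apply the weak parabolic maximum principle to $u-\lambda\phi$ on the annular region $(B_1\setminus\overline{B_\rho})\times[\varepsilon/2,T]$, with $\lambda$ chosen small enough that $\lambda\phi\leq u$ on the parabolic boundary, and then to let $\rho\to 0^+$. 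Outside $B_1$, $\phi=d^{-\alpha}$ is bounded, so the constant lower bound $c_0$ from the first step already yields \eqref{eq:t3} in that region.

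The hardest part will be the inner boundary $\partial B_\rho\times[\varepsilon/2,T]$, where $\phi=\rho^{-\alpha}\to\infty$ while the Gaussian bound alone only gives a uniform positive lower bound on $u$. To overcome this, I would bootstrap via the Duhamel identity
$$u(w,t)\geq\int_{\varepsilon/4}^{t}e^{(t-s)\Deltah}\bigl(V^*_0 u(\cdot,s)\bigr)(w)\,ds\geq c_0\int_{\varepsilon/4}^{t}\bigl(e^{(t-s)\Deltah}(V^*_0\chi_{B_1})\bigr)(w)\,ds,$$
and then iterate. The scaling invariance of the pair $(\Deltah,V^*_0)$ under the anisotropic dilations $D_\lambda$, together with $\phi(D_\lambda w)=\lambda^{-\alpha}\phi(w)$, forces the iterated Duhamel series to reproduce, up to a positive constant, the function $\phi$ near the origin, yielding $u\geq c\,\phi$ on $\partial B_\rho\times[\varepsilon,T]$ with $c$ independent of $\rho$. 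This is exactly what is needed to pass to the limit $\rho\to 0$ in the maximum principle argument. The technical heart of the proof is therefore this scaling/iteration argument producing the precise $d^{-\alpha}$ blow-up rate, rather than merely some divergent lower bound at the origin.
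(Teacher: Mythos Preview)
Your first two steps are fine and match the paper: the Gaussian lower bound gives $u\ge c_0$ on $\overline{\Omega'}\times[\varepsilon/2,T]$, and $\phi=d^{-\alpha}$ is a stationary solution of $\partial_t v=\Deltah v+V_0^*v$ on $B_1\setminus\{0\}$. The reduction to $V=V_0^*$, $f\equiv 0$ is also correct.

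The gap is in your ``scaling/iteration'' step for the inner boundary. One Duhamel iteration starting from the constant bound $u\ge c_0$ gives
\[
u(w,t)\ \ge\ c_0\int_{0}^{t-\varepsilon_0}\bigl(e^{\tau\Deltah}V_0^*\bigr)(w)\,d\tau,
\]
and a scaling computation (using that $V_0^*$ is $D_\lambda$-homogeneous of degree $-2$ near the origin and that $e^{\sigma\Deltah}V_0^*(e)\sim K/\sigma$ for large $\sigma$) shows this is only of order $|\log d(w)|$ as $w\to 0$, \emph{not} $d(w)^{-\alpha}$. Each further iteration adds another logarithmic factor; no finite number of iterations produces the power rate $d^{-\alpha}$. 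Summing the full Picard series is circular: it simply reconstructs the solution of $\partial_t v=\Deltah v+V_0^*v$ with constant initial data, which is exactly the object whose pointwise lower bound you are trying to establish. The scaling relation $\phi(D_\lambda w)=\lambda^{-\alpha}\phi(w)$ is consistent with the desired conclusion but does not by itself force the iterates to converge to a multiple of $\phi$. So the maximum-principle-on-annuli scheme collapses: with only a logarithmic bound on $\partial B_\rho$ you are forced to take $\lambda\to 0$ as $\rho\to 0$, and the limit gives nothing.

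The paper's argument is entirely different and supplies exactly the missing ingredient. It passes to the Dirichlet problem \eqref{eq:23} on a small ball $B_{r_0}$, sets $g_n=v_n/\phi$, and first proves the \emph{upper} bound $g_n\le C_0$ by multiplying by $v_n^{p-1}\phi^{2-p}$, showing $\partial_t\int g_n^p\phi^2\le 0$, and letting $p\to\infty$. The crucial \emph{lower} bound $g\ge C_1>0$ near the origin is then obtained by a Moser-type iteration: one multiplies by $\mathcal I'(g_n)\mathcal I(g_n)\phi\psi^2$ for convex $\mathcal I$, exploits the radial structure of $v_n$ together with the weighted Sobolev-type inequality of Lemma~\ref{lem:GGKRT}, and iterates in the exponent to pass from an $L^2$ bound to an $L^\infty$ bound on $g^{-\gamma}$. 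This Moser iteration in the weighted space $L^2(\phi^2\,dw)$ is the technical heart of the proof, and nothing in your proposal substitutes for it.
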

\begin{proof}
Assume that
$\Omega'\subset \subset \HH^N$ with $0\in \Omega'$.
Since $u_0\not\equiv 0$, it follows from Theorem \ref{Gaussian-estim} that there is a constant $C_0>0$ with
\begin{equation}\label{etoile}
 e^{t\Deltah}u_0(\tilde w)\geq C_0,
\end{equation}
for $\tilde w\in \Omega'$ and $\frac{\varepsilon}{2}\leq t<T$.  Since $u\ge e^{t\Deltah}u_0$, by the Maximum Principle, \eqref{eq:t3} follows from \eqref{etoile} for the case $\alpha=0$. So from now on we assume that $\alpha$ is strictly positive.

Let as before $V_{0,n}:=\inf \left\{ V^*_0,n\right\}$, and
consider the problems
\begin{equation}\label{eq:21}
\left\{
\begin{array}{ll}
\dfrac{\partial z}{\partial t}=\Deltah z+V^*_0z &\hbox{\ in }\mathcal D'\left(\HH^N\times [\frac{\varepsilon}{2},T]\right),\\
\\

z\left(\tilde w,\frac{\varepsilon}{2}\right)=C_0\,\chi_{\Omega'}(\tilde w) &\tilde w\in \HH^N,
\end{array}
\right.
\end{equation}

\begin{equation}\label{eq:22}
\left\{
\begin{array}{ll}
\dfrac{\partial z_n}{\partial t}=\Deltah z_n+V_{0,n}z_n &\hbox{\ in }\mathcal D'\left(\HH^N\times [\frac{\varepsilon}{2},T]\right),\\
\\
z_n\left(\tilde w,\frac{\varepsilon}{2}\right)=C_0\,\chi_{\Omega'}(\tilde w)&\tilde w\in\HH^N,
\end{array}
\right.
\end{equation}
and for $B_{r_0}\subset \Omega'$ a ball centered at the origin with radius $r_0\in (0,1),$
\begin{equation}\label{eq:23}
\left\{
\begin{array}{lll}
\dfrac{\partial v_n}{\partial t}=\Deltah v_n+V_{0,n}v_n &\hbox{\ in } \mathcal D'\left(B_{r_0}\times [\frac{\varepsilon}{2},T]\right),\\ \\
v_n=0 &\text{ on }\partial B_{r_0},\\ \\
v_n\left(\tilde w,\frac{\varepsilon}{2}\right)=C_0 &\tilde w\in B_{r_0}.
\end{array}
\right.
\end{equation}
Notice that \eqref{eq:22} has a unique solution $z_n\geq0$, also that $z_n(\tilde w,t)\uparrow z(\tilde w,t)$, for almost every  $(\tilde w,t)\in \HH^N\times
[\frac{\varepsilon}{2},T)$,
where $z$ is the unique solution of \eqref{eq:21}.
It is also clear that $z_n\geq v_n$, the solution of \eqref{eq:23},
and that $v_n$ is a radial function\footnote{
Recall that a function $g(w)$ is radial on $\HH^n$ if $w=(z,l)$ and $g(z,l)=g(|z|,l)$.
In fact, our function $v_n$ is even more special, since $v_n=v_n(d(w)).$ Notice that this gives $\nablah v_n=v'_n(d(w))\nablah d(w)$.
}.
Finally, we note that $u$ is bounded below by the solution of \eqref{eq:21} since $V\ge V^*_0$.

Multiply the equation in $\eqref{eq:23}$ by $v_n^{p-1}\phi^{2-p},\,p\geq 2,$  where we recall that $\phi(w)=d(w)^{-\alpha}$ only depends on $w$, and  integrate to get
\[
 \int_{B_{r_0}}\frac{\partial v_n}{\partial t}v_n^{p-1}\phi^{2-p}=\int_{B_{r_0}}(\Deltah v_n)v_n^{p-1}\phi^{2-p}+\int_{B_{r_0}}V_{0,n}v_n^p\phi^{2-p},
\]
so
\begin{equation}\label{eq:24-0}
\frac{\partial}{\partial t}\int_{B_{r_0}}\frac{1}{p}\left( \frac{v_n}{\phi} \right)^p\phi^2=
  -\int_{B_{r_0}}\nablah v_n\cdot\nablah\left( v_n^{p-1}\phi^{2-p} \right)+\int_{B_{r_0}}V_{0,n}\left( \frac{v_n}{\phi} \right)^p\phi^2.
\end{equation}

Set $g_n=\frac{v_n}{\phi}$. Then equation \eqref{eq:24-0} becomes
\begin{align*}
\frac{\partial }{\partial t}\int_{B_{r_0}}\frac{1}{p}g_n^p\phi^2=-\frac{4(p-1)}{p^2}\int_{B_{r_0}}|\nablah g_n^{p/2}|^2\phi^2
    +\int_{B_{r_0}}g_n^p\left(\Deltah \phi\right) \phi+\int_{B_{r_0}}V_{0,n}g_n^p\phi^2.
\end{align*}

Using \eqref{eq:11-2} and the fact that $\alpha(2N-\alpha)=c$,  we obtain $V_{0,n}\leq V^*_0\leq -\frac{\Deltah \phi}{\phi}$, so that
$V_{0,n}\phi^2\leq \left( -\Deltah \phi \right)\phi$.
Hence we have shown
\begin{equation*}
 \frac{\partial}{\partial t}\int_{B_{r_0}}g_n^p\phi^2\leq 0,
\end{equation*}
and we thus have, for $\frac{\varepsilon}{2}\leq t\leq T$, that
\begin{equation}\label{eq:24}
\left( \int_{B_{r_0}}v_n^{p}\phi^{2-p} \right)^{1/p}\leq C_0\left( \int_{B_{r_0}}\phi ^{2-p}\right)^{1/p}.
\end{equation}
Letting $p\to \infty$ in \eqref{eq:24} we get
\begin{equation}\label{eq:g}
g_n\leq C_0 \hbox{\ a.e. in }B_{r_0},
\end{equation}
which is equivalent to $v_n\leq C_0\phi$ a.e. in $B_{r_0}$.
So we can make sense of
\[
 v=\lim_{n\to \infty}v_n \text{ and }g=\lim_{n\to \infty} g_n.
\]
Now we claim that
\begin{equation}\label{eq:25}
 0<C_1\le g(w,t)\leq C_0
\end{equation}
for $t\in [\varepsilon,T]$ and a.e. $w\in B_{\frac{r_0}{2}}$.
Once \eqref{eq:25} is proved, and since
\begin{equation*}
 u\geq z\geq z_n\geq v_n=g_n\phi,
\end{equation*}
 \eqref{eq:t3} follows directly in the case $\Omega '=B_{\frac{r_0}{2}}$. Otherwise,
we observe that for almost every  $\tilde w\in\Omega'\setminus B_{\frac{r_0}{2}}$ we have
\begin{equation*}
 h(\tilde w,t)=\phi (\tilde w)^{-1}u(\tilde w)\geq \phi(\tilde w)^{-1}\left( e^{t\Deltah}u_0 \right)(\tilde w)\geq C_2>0,
\end{equation*}
from Theorem \ref{Gaussian-estim} since  
\begin{equation*}
 \phi(\tilde w)^{-1}\geq C_3>0,
\end{equation*}
for all $t\in [\varepsilon ,T]$  and some constants $C_2,C_3>0$.
This concludes the proof of \eqref{eq:t3}.

Now we must  prove \eqref{eq:25}.
By \eqref{eq:g}, the right inequality is proved. For the remaining part of \eqref{eq:25}, let $\mathcal I\in C^2(\R_+,\R_+)$ be convex. Multiply equation \eqref{eq:23}
by ${\mathcal I}'(g_n){\mathcal I}(g_n)\phi\psi^2$ and integrate over
$Q=B_{r_0}\times\left( \frac{\varepsilon}{2},t \right),\,t\in [\frac{\varepsilon}{2},T]$,
where
$\psi\in {\mathcal D}(B_{r_0}\times (\frac{\varepsilon}{2},T])$,
to get
\begin{equation*}
 \int_Q\mathcal I'(g_n)\mathcal I(g_n)\frac{\partial v_n}{\partial t} \phi\psi^2=\int_Q
 \left\{
 \Deltah v_n\mathcal I'(g_n)\mathcal I(g_n)\phi\psi^2+V_{0,n}v_n\mathcal I'(g_n)\mathcal I(g_n)\phi\psi^2
 \right\},
\end{equation*}
\begin{equation*}
 \frac{1}{2} \int_Q  \frac{\partial }{\partial t}
  ( \mathcal I (g_n))^2 \phi^2\psi^2
      =-\int_Q\nablah (g_n\phi)\cdot \nablah (\mathcal I'(g_n)\mathcal I(g_n)\phi\psi^2)
      +\int_QV_{0,n}g_n\mathcal I'(g_n)\mathcal I(g_n)\phi^2\psi^2.
\end{equation*}
Notice that
\begin{align*}
&\int_{B_{r_0}}\nablah (g_n\phi)\cdot \nablah (\mathcal I'(g_n)\mathcal I (g_n)\phi\psi^2) \\
&\quad =\int_{B_{r_0}}\left\{
      (\nablah g_n\cdot\nablah (\mathcal I'(g_n))\mathcal I(g_n)\phi\psi^2)\phi+g_n(\nablah\phi\cdot\nablah(\mathcal I'(g_n))\mathcal I(g_n)\phi\psi^2 ) \right\}\\
&\quad = \int_{B_{r_0}}\left\{
     \mathcal I''(g_n)| \nablah g_n|^2\mathcal I(g_n)\phi^2\psi^2+|\nablah \mathcal I(g_n)|^2\phi^2\psi^2   \right\}\\
&\qquad+\int_{B_{r_0}}\left\{
      (\nablah \mathcal I(g_n)\cdot \nablah \phi) \psi^2\phi \mathcal I(g_n)+(\nablah\mathcal I(g_n)\cdot\nablah\psi^2)\mathcal I(g_n)  \phi^2 \right\}\\
&\qquad+\int_{B_{r_0}} (-\Deltah \phi)g_n\mathcal I'(g_n)\mathcal I(g_n)\phi\psi^2
    +\int_{B_{r_0}}-\left( \nablah\phi\cdot \nablah\mathcal I(g_n)\right)\mathcal I(g_n)\phi\psi^2\\
&\quad = \int_{B_{r_0}}\left\{
     \mathcal I''(g_n)| \nablah g_n|^2\mathcal I(g_n)\phi^2\psi^2+|\nablah \mathcal I(g_n)|^2\phi^2\psi^2   \right\}\\
&\qquad+ \int_{B_{r_0}}(\nablah\mathcal I(g_n)\cdot\nablah\psi^2)\mathcal I(g_n)  \phi^2 +\int_{B_{r_0}} (-\Deltah \phi)g_n\mathcal I'(g_n)\mathcal I(g_n)\phi\psi^2,
\end{align*}
and so
\begin{align}\label{eq:26}
&\frac{1}{2}\int_{Q}\frac{\partial}{\partial t}\left( \mathcal I(g_n)^2 \right)\phi^2\psi^2+\int_Q\nablah \mathcal I(g_n) \cdot \nablah\psi^2\mathcal I(g_n)\phi^2
    \nonumber \\
&\quad =-\int_Q\mathcal I''(g_n)|\nablah g_n|^2\left( \mathcal I(g_n)\phi^2\psi^2 \right) \nonumber \\
&\qquad -\int_Q |\nablah\mathcal I(g_n)|^2 \phi^2\psi^2+\int \Deltah \phi g_n\mathcal I'(g_n)\mathcal I(g_n)\phi\psi^2\\
&\qquad +\int_QV_{0,n}g_n\mathcal I'(g_n)\mathcal I(g_n)\phi^2\psi^2.\nonumber
\end{align}
Using H\"older's inequality,
\begin{align*}
&\left| 2\int_{B_{r_0}} \left( \nabla \mathcal I(g_n)\cdot \nablah\psi\right) \mathcal I(g_n)\phi^2\psi
 \right|\\
&\quad \leq \frac{1}{2}\int_{B_{r_0}}|\nablah \mathcal I (g_n)|^2\phi^2\psi^2
    +2\int_{B_{r_0}}|\nablah \psi |^2 |\mathcal I(g_n)|^2 \phi^2,
\end{align*}
on the second term of the left hand side of \eqref{eq:26}, using the convexity assumption on $\mathcal I$, and integrating by parts on the first term in \eqref{eq:26}, we obtain
\begin{align}\label{eq:27}
&\frac{1}{2} \int_{B_{r_0}}\left(\mathcal I(g_n)^2 \psi^2\phi^2\right)(t)+\frac{1}{2}\int_{Q}|\nablah \mathcal I(g_n)|^2 \phi^2\psi^2\nonumber\\
&\quad \leq \int_{Q} (V_{0,n}\phi+\Deltah \phi)g_n\mathcal I '(g_n)\mathcal I (g_n)\phi\psi^2\\
&\qquad +\int_{Q}\mathcal I (g_n)^2(2|\nablah\psi|^2+\psi\frac{\partial \psi}{\partial t})\phi^2\nonumber.
\end{align}

Now, we make a key observation.
Since $\alpha <N$, we have $\phi\Deltah \phi\in L^1(B_{r_0})$. Assume $r_0$ to be sufficiently  small.
Since $V^*_0=\frac{-\Deltah \phi}{\phi}$,
the first term on the right hand side of \eqref{eq:27} converges  to $0$ as $n\to \infty$ by Lebesgue's dominated convergence theorem,
since $\|g_n\|\leq C_0$ in $B_{r_0}$ and $\mathcal I$ is convex, $C^2$ and nonnegative. Letting $n\to \infty$,  \eqref{eq:27} gives
\begin{align}\label{eq:28}
&\int_{B_{r_0}}\mathcal I(g)^2 \psi^2\phi^2+\int_{Q}|\nablah \mathcal I(g)|^2\psi^2\phi^2\nonumber\\
&\qquad \leq 2\int_Q\mathcal I(g)^2 (2 |\nablah \psi |^2+\psi\frac{\partial \psi}{\partial t})\phi ^2.
\end{align}
Choose $\psi$ so that $0\leq \psi\leq 1$

for $s\geq \frac{\varepsilon}{2}, \,r<r_0$ and $0<\delta<r$, and
\begin{equation*}
 \psi(w,t)=\left\{
 \begin{array}{l}
  1\qquad B_{r-\delta}\times[s+\delta,T],\\
  0\qquad \left( B_{r_0}\times [0,s] \right)\cup \left( B_{r_0}\setminus
  	B_{r-\frac{\delta}{2}}\times[0,T] \right),
 \end{array}
 \right.
\end{equation*}
 so that
\begin{equation*}
\left| \frac{\partial \psi}{\partial t}\right|\leq \frac{\tilde C}{\delta},\qquad \left|\nablah \psi\right|^2\leq \frac{\tilde C}{\delta^2},
\end{equation*}
for some constant $\tilde C$ independent  of $s,\delta.$ Then for all $s+\delta\leq t\leq T$ \eqref{eq:28} becomes
\begin{align}\label{eq:29}
&\int_{B_{r-\delta}} \mathcal I(g(t))^2\phi^2+\int_{s+\delta}^T\int_{B_{r-\delta}}|\nablah \mathcal I(g)|^2 \phi ^2\nonumber\\
&\quad \leq 6\tilde C\delta^{-2}\int_s^T\int_{B_{r_0}}\mathcal I(g)^2\phi^2.
\end{align}

Note that for fixed $t$, $w\mapsto \mathcal I(g(w,t))$ is a radial function; in fact as we  noted earlier, $\mathcal I(g(w,t))$ is a function
of $d(w)$. Applying \eqref{eq:32}, with $\beta$ as in Lemma \ref{lem:GGKRT}, and \eqref{eq:29}, one obtains
\begin{align*}
&\int_{s+\delta}^T\int_{B_{r-\delta}}\mathcal I(g)^{2+2\beta}\phi^2\\
&\quad  \leq \hat {\hat C}
    \int_{s+\delta}^T\left[
    \left(\int_{B_{r-\delta}} |\nablah \mathcal I(g(t))|^2 \phi ^2+\mathcal I(g(t))^2\phi^2\right)
    \left( \int_{B_{r-\delta}} \mathcal I(g(t))^2\phi^2\right)^{\beta}
        \right]dt
    \\
&\quad  \leq \hat {\hat C}
\left[ \int_{s+\delta}^T
\int_{B_{r-\delta}} |\nablah \mathcal I(g)|^2 \phi ^2
        +\int_{s+\delta}^T\int_{B_{r-\delta}}\mathcal I(g)^2\phi^2
\right]\left( 6\tilde C\delta^{-2}\int_s^T\int_{B_{r_0}}\mathcal I(g)^2\phi^2 \right)^{\beta}
    \\
&\quad      \leq \hat {\hat C} (6\tilde C\delta^{-2}+1)
      \left( \int_s^T\int_{B_{r_0}}\mathcal I(g)^2\phi^2 \right)
      \left( 6\tilde C\delta^{-2}\int_s^T\int_{B_{r_0}}\mathcal I(g)^2\phi^2 \right)^{\beta}.
\end{align*}
Since $0<\delta<r< 1$, it follows that
\begin{align}\label{eq:34}
&\left( \int_{s+\delta}^T\int_{B_{r-\delta}}\mathcal I(g)^{2+2\beta}\phi^2 \right)^{\frac{1}{2+2\beta}}
    \leq \hat{\hat C}^{\frac{1}{2+2\beta}} (6\tilde C+1)^{1/2}\delta^{-1}\left( \int_s^T \int_{B_{r_0}} \mathcal I(g)^2\phi^2\right)^{1/2}\nonumber \\
&\quad \leq  \overline C\delta^{-1}\left( \int_s^T\int_{B_{r_0}}\mathcal I(g)^2\phi^2 \right)^{1/2}.
\end{align}
Let $b>0$ be sufficiently small, and set
\begin{align*}
&\delta=\frac{b}{2^n},\quad r_{n+1}=r_n-\frac{b}{2^n},\quad \mathcal I_{n+1}=\mathcal I_{n}^{1+\beta},\quad s_{n+1}=s_n+\frac{b}{2^n},\\
&\text{ and } k_n=\left( \int_{s_n}^T\int_{B_{r_n}}\mathcal I_n(g)^2\phi^2 \right)^{1/2}.
\end{align*}
Here $\mathcal I_1=\mathcal I$ and $r_1,s_1>0$ with $s_1\geq \frac{\varepsilon}{2}$ and $r_1<1$ are given.
Applying \eqref{eq:34} yields
\begin{equation}\label{eq:35}
k_{n+1}^{\frac{1}{1+\beta}}\leq\overline C2^nb^{-1}k_n.
\end{equation}
Applying Lemma \ref{lem:kn} in the Appendix, we have
\[
k_n^{\frac{1}{(1+\beta)^{n-1}}}
    \leq \left( \frac{\overline C}{b} \right)^{ \frac{a_n}{(1+\beta)^{n-2}} }2^{ \frac{d_n}{(1+\beta)^{n-2}} }k_1,
\]
where $a_n=\sum_{j=0}^{n-2}(1+\beta)^j$ and $d_n=\sum_{j=0}^{n-2}(j+1)(1+\beta)^{n-2-j}$ for $n\ge 2$.
Letting $n\to \infty$
we have $$s_n\to s_1+b, \,r_n  \to r_1-b ,\,
\frac{a_n}{(1+\beta)^{n-2}}\to \left(\frac{1+\beta}{\beta}\right),\,
\frac{d_n}{(1+\beta)^{n-2}}\to \left(\frac{1+\beta}{\beta}\right)^2$$
and
taking into account that $\mathcal I_n=\mathcal I^{(1+\beta)^{n-1}}$ we obtain
\[
 k_n^{\frac{1}{(1+\beta)^{n-1}}}=
        \left( \int_{s_n}^T\int_{B_{r_n}}\mathcal I(g)^{2(1+\beta)^{n-1}}\phi^2 \right)^
                {\frac 1{2(1+\beta)^{n-1}}}
                    \to \sup_{B_{r_1-b}\times [s_1+b,T]} \mathcal I(g).
\]
Finally, we have
\begin{equation}\label{BG-gamma}
 \sup_{B_{r_1-b}\times [s_1+b,T]} \mathcal I(g)
                        \leq
                    \left( \frac{\overline C}{b} 2^{\frac{1+\beta}{\beta}}\right)^{\frac{1+\beta}{\beta}}
                    \left( \int_{s_1}^T\int_{B_{r_1}}\mathcal I(g)^2\phi^2 \right)^{1/2}.
\end{equation}

   Now, consider
a sequence $\mathcal I_n(r) \in C^2(\R_+,\R_+)$ of convex function
converging to $\frac{1}{r^\gamma}$ as $n\to \infty$,
where $\gamma>0$ is a parameter to be chosen later.
Replace $\mathcal I$ by a $\mathcal I_n$ in \eqref{BG-gamma}, we obtain
\[
 \sup_{B_{r_1-b}\times [s_1+b,T]}  g^{-\gamma}
                        \leq
                    \left( \frac{\overline C}{b} 2^{\frac{1+\beta}{\beta}}\right)^{\frac{1+\beta}{\beta}}
                    \left( \int_{s_1}^T\int_{B_{r_1}} g^{-2\gamma}\phi^2 \right)^{1/2}.
\]

Set $s_1= \frac{3}{4}\varepsilon$, $b=\frac{\varepsilon}{4}$ and $r_1<r_0$,
where $r_0$ is the one chosen in the beginning of the proof.
We have
\[
 g(\omega,t)=\frac{v}{\phi}\geq \phi^{-1}(w)\left( e^{t\Deltah}C_0 \right)(w)= C_0\phi^{-1}(w),
\]
 for almost every  $w\in B_{r_0}$ where $C_0$ is the constant given in \eqref{etoile}.
So
\[
 \sup_{B_{r_1-\varepsilon/4}\times [\varepsilon,T]}  g^{-\gamma}
                        \leq
                    C_2C_0^{-\gamma}
                    \varepsilon^{-\frac{1+\beta}{\beta}}
                    \left( \int_{\frac{3}{4}\varepsilon}^T\int_{B_{r_1}} \phi^{2+2\gamma} \right)^{1/2},
\]
and it follows that
\begin{equation}\label{eq:415}
 g(w,t)\geq C_2^{-\frac{1}{\gamma}}C_0\varepsilon^{\left( 1+\frac{1}{\beta} \right)\frac{1}{\gamma}}
 \left(\int_{B_{r_1}}\phi^{2+2\gamma}\right)^{-\frac{1}{2\gamma}},
\end{equation}
for almost every  $w\in B_{r_1-\frac{\varepsilon}{4}}$ and for all $t\in [\varepsilon,T]$, where $C_2$ is a positive constant independent of
$\varepsilon$ and $r_1.$
A simple computation shows that $\int_{B_{r_1}}\phi^{2+2\gamma}<\infty$ by choosing $0<\gamma <\frac{N+1}{\alpha}-1$, which is possible since $\alpha\in (0,N]$.

Thus \eqref{eq:25} follows by taking $\gamma\in (0,\frac{N+1}{\alpha}-1)$ and $\varepsilon=2(2r_1-r_0)$ with $\frac{r_0}{2}<r_1<r_0$.

This concludes the proof.
\end{proof}

We end this section by the following remark.
\begin{remark}
The arguments used are based on the explicit form of the fundamental solution and on the existence of an underlying group of dilations; thus, the results would likely extend 
to the setting of H-type groups.
\end{remark}

\appendix{}
\section{} 

In this Appendix we collect all technical lemmas that we needed for proving the main result.
\begin{lemma}\label{lem:kn}
 For $\beta>0$ and $n\geq 2$, define $a_n=\sum_{j=0}^{n-2}(1+\beta)^j$ and $d_n=\sum_{j=0}^{n-2}(j+1)(1+\beta)^{n-2-j}$ and $k_n\ge 0$ for $n\geq 1$ such that
 $$k_{n}^{\frac{1}{1+\beta}}\leq\overline C2^{n-1}b^{-1}k_{n-1}.$$ Then
\begin{equation}\label{eq:36}
k_n^{\frac{1}{1+\beta}}\leq \left( \frac{\overline C}{b} \right)^{a_n}2^{d_n}k_1^{(1+\beta)^{n-2}}.
\end{equation}
\end{lemma}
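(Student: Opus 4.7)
The plan is to prove \eqref{eq:36} by induction on $n$, which amounts to iterating the given recursion $k_n^{1/(1+\beta)} \leq \overline C\, 2^{n-1} b^{-1} k_{n-1}$ and keeping track of the exponents produced.

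For the base case $n=2$, the inequality is exactly the hypothesis with $n=2$, since $a_2 = 1$, $d_2 = 1$, and $(1+\beta)^{n-2} = 1$. For the inductive step, suppose the bound holds for some $n\geq 2$. Raising it to the $(1+\beta)$ power yields
\begin{equation*}
k_n \leq \left(\frac{\overline C}{b}\right)^{(1+\beta)a_n} 2^{(1+\beta)d_n} k_1^{(1+\beta)^{n-1}}.
\end{equation*}
Substituting into the recursion $k_{n+1}^{1/(1+\beta)} \leq \overline C\, 2^n b^{-1} k_n$ gives
\begin{equation*}
k_{n+1}^{1/(1+\beta)} \leq \left(\frac{\overline C}{b}\right)^{1+(1+\beta)a_n} 2^{n + (1+\beta)d_n} k_1^{(1+\beta)^{n-1}}.
\end{equation*}
The proof then reduces to checking that the exponents match $a_{n+1}$ and $d_{n+1}$.

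The key identities are $a_{n+1} = 1 + (1+\beta)a_n$ and $d_{n+1} = n + (1+\beta)d_n$, both of which follow immediately from the definitions: for $a_{n+1}$, split off the $j=0$ term from $\sum_{j=0}^{n-1}(1+\beta)^j$ and reindex; for $d_{n+1}$, split off the $j=n-1$ term of $\sum_{j=0}^{n-1}(j+1)(1+\beta)^{n-1-j}$ (which contributes $n$) and factor $(1+\beta)$ out of the remaining sum to recognize $d_n$. The exponent of $k_1$ is $(1+\beta)^{n-1} = (1+\beta)^{(n+1)-2}$, as required.

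There is no genuine obstacle here; the statement is purely algebraic once the two telescoping identities for $a_n$ and $d_n$ are in hand. An alternative, equally clean route is to avoid induction entirely and iterate the hypothesis $n-1$ times, obtaining $k_n \leq \prod_{j=1}^{n-1}\bigl((\overline C/b)^{(1+\beta)^j} 2^{(n-j)(1+\beta)^j}\bigr) k_1^{(1+\beta)^{n-1}}$, and then verify by index-shifts that $\sum_{j=1}^{n-1}(1+\beta)^{j-1} = a_n$ and $\sum_{j=1}^{n-1}(n-j)(1+\beta)^{j-1} = d_n$.
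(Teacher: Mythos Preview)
Your proof is correct and follows essentially the same inductive argument as the paper: apply the recursion, use the inductive hypothesis, and verify the identities $a_{n+1}=1+(1+\beta)a_n$ and $d_{n+1}=n+(1+\beta)d_n$. Your base case $n=2$ is stated more cleanly than the paper's, and the alternative direct-iteration argument you sketch is also valid.
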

\begin{proof} We use an induction argument. Assume \eqref{eq:36}  is true for $1\leq k\leq n.$ We will show
\begin{equation*}
k_{n+1}^{\frac{1}{1+\beta}}\leq \left( \frac{\overline C}{b} \right)^{a_{n+1}}2^{d_{n+1}}k_1^{(1+\beta)^{n-1}}.
\end{equation*}
Clearly \eqref{eq:35} gives \eqref{eq:36} if $n=1.$
By \eqref{eq:35},
\begin{equation*}
 k_{n+1}^{\frac{1}{1+\beta}}\leq \overline C 2^nb^{-1}k_n
      \leq\left( \frac{\overline C}{b} \right)2^n\left( \frac{\overline C}{b} \right)^{a_n(1+\beta)}2^{d_n(1+\beta)}k_1^{(1+\beta)^{n-1}},
\end{equation*}
by the induction hypothesis. Now it is easy to check that
\begin{equation*}
 a_n(1+\beta)+1=\sum_{j=0}^{n-2}(1+\beta)^{j+1}+1=\sum_{j=0}^{n-1}(1+\beta)^j=a_{n+1}
\end{equation*}
and
\begin{align*}
d_n(1+\beta)+n=\sum_{j=0}^{n-2}(j+1)(1+\beta)^{n-1-j}+n
    =\sum_{j=0}^{n-1}(j+1)(1+\beta)^{n-1-j}=d_{n+1}.
\end{align*}
\end{proof}

The following two lemmas can be found in \cite[Appendix]{BG84}.
\begin{lemma}\label{BG-lem1-techn}
If $0\leq h\in C^1[0,2r]$ and $h(2r)=0$, then
\begin{equation}\label{eq:30}
 \left( \int_0^{2r} h^p(s)s^{\gamma-1}ds\right)^{1/p}\leq M_0\left( \int_0^{2r} \left|\frac{dh}{ds}\right|^{2}s^{\gamma -1}ds \right)^{1/2},
\end{equation}
where $\gamma=2N+2-2\alpha>2$, $\frac{1}{p}=\frac{1}{2}-\frac{1}{\gamma}$ and $M_0$ is a constant depending only on $\gamma.$
\end{lemma}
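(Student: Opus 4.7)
The inequality is a one-dimensional weighted Sobolev embedding with weight $s^{\gamma-1}$, and the exponent $p=\frac{2\gamma}{\gamma-2}$ is precisely the critical Sobolev exponent in ``dimension'' $\gamma$. For integer $\gamma\ge 3$ one could simply apply the Sobolev inequality $\|u\|_{L^{2^*}(\R^\gamma)}\le S_\gamma\|\nabla u\|_{L^2(\R^\gamma)}$ to the radial function $u(x)=h(|x|)$ on $\R^\gamma$ and convert to polar coordinates, picking up a factor $\omega_{\gamma-1}^{1/2-1/p}$. In our setting, however, $\gamma=2N+2-2\alpha$ is typically non-integer (since $\alpha=N-\sqrt{N^2-c}$ runs over $[0,N]$), so I would instead give a direct one-dimensional argument combining a pointwise bound with a weighted Hardy inequality.

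Here is the plan. Set $I:=\int_0^{2r}|h'|^2s^{\gamma-1}\,ds$. Since $h(2r)=0$, the fundamental theorem of calculus followed by Cauchy--Schwarz with the weight split $|h'(t)|=\bigl(|h'(t)|^2 t^{\gamma-1}\bigr)^{1/2}t^{(1-\gamma)/2}$ yields
$$
h(s)\le\Bigl(\int_s^{2r}|h'|^2t^{\gamma-1}\,dt\Bigr)^{\!1/2}\!\Bigl(\int_s^{2r}t^{1-\gamma}\,dt\Bigr)^{\!1/2}\le (\gamma-2)^{-1/2}s^{(2-\gamma)/2}I^{1/2},
$$
using $\gamma>2$ to evaluate the last integral. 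Now invoke the algebraic identity $p-2=\frac{4}{\gamma-2}$, which makes $(2-\gamma)(p-2)/2=-2$; raising the bound to the power $p-2$ produces $h(s)^{p-2}\le C_1(\gamma)\,s^{-2}\,I^{(p-2)/2}$. Splitting $h^p=h^{p-2}\cdot h^2$ and integrating against $s^{\gamma-1}$ then reduces the claim to
$$
\int_0^{2r}h^p s^{\gamma-1}\,ds\le C_1(\gamma)\,I^{(p-2)/2}\!\int_0^{2r}h^2 s^{\gamma-3}\,ds.
$$
The remaining factor is controlled by the classical one-dimensional weighted Hardy inequality: integration by parts against $(s^{\gamma-2})'=(\gamma-2)s^{\gamma-3}$ followed by Cauchy--Schwarz gives $\int_0^{2r}h^2 s^{\gamma-3}\,ds\le \bigl(\frac{2}{\gamma-2}\bigr)^{2}I$. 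Combining these two bounds and noting $1+(p-2)/2=p/2$ yields $\int h^p s^{\gamma-1}\,ds\le C(\gamma)\,I^{p/2}$; taking $p$-th roots gives the claim with $M_0=C(\gamma)^{1/p}$ depending only on $\gamma$.

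The one point deserving care is the integration by parts in the Hardy step near $s=0$, since $s^{\gamma-3}$ is singular there whenever $\gamma<3$ (which can occur when $\alpha>N-\frac12$). This is handled routinely by truncating to $[\eta,2r]$, performing the integration by parts there (the boundary contribution $h(\eta)^2\eta^{\gamma-2}$ vanishes as $\eta\to 0^+$ because $h$ is bounded on $[0,2r]$ and $\gamma-2>0$), and passing to the limit by monotone convergence. Apart from this technical fiddling, everything reduces to direct algebra using the specific relationship $p=\frac{2\gamma}{\gamma-2}$, so this Hardy step is the only place where the hypothesis $\gamma>2$ is truly used.
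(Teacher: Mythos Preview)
The paper does not actually prove this lemma: it simply states that ``the following two lemmas can be found in \cite[Appendix]{BG84}'' and refers the reader there. Your argument is correct and self-contained, and in fact supplies more than the paper does. The pointwise bound from Cauchy--Schwarz combined with the weighted Hardy inequality is exactly the right mechanism; the algebraic identity $p-2=\frac{4}{\gamma-2}$ is what makes the exponents line up, and your handling of the boundary term at $s=0$ via truncation is fine (indeed $\int_0^{2r}h^2s^{\gamma-3}\,ds\le \|h\|_\infty^2\,(2r)^{\gamma-2}/(\gamma-2)<\infty$ since $\gamma>2$, so the quantity is finite a priori and monotone convergence applies without difficulty). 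Compared to the radial-Sobolev route you mention for integer $\gamma$, your direct one-dimensional argument has the advantage of working uniformly for all real $\gamma>2$, which is what is needed here since $\gamma=2N+2-2\alpha$ with $\alpha=N-\sqrt{N^2-c}$ is generically irrational.
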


\begin{lemma}\label{BG-lem2-techn}
If $0<r'\leq r\leq 1$, $0\leq h\in C^1[0,r]$ then
\begin{equation}\label{eq:31}
 \left( \int_0^r|h(s)|^p s^{ 2N-2\alpha+1}ds\right)^{2/p}
      \leq \hat C\left( \int_0^r[|h'(s)|^2+|h(s)|^2]s^{2N-2\alpha+ 1}ds \right),
\end{equation}
where $\frac{1}{p}\geq \frac{1}{2}-\frac{1}{2N+2-2\alpha}$ and $p=\infty$ if $N=\alpha-1$, $\hat C$ depends on $r'$ but not $r.$
\end{lemma}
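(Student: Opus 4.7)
Writing $\gamma:=2N+2-2\alpha$ and $d\mu(s):=s^{\gamma-1}\,ds$, the target inequality is the continuous weighted Sobolev embedding $H^1([0,r];d\mu)\hookrightarrow L^p([0,r];d\mu)$. The plan is to deduce it from Lemma~\ref{BG-lem1-techn}, which is the same embedding at the critical exponent $1/p_*=1/2-1/\gamma$ on $[0,2r]$, but only for functions vanishing at the right endpoint. Since $r\leq 1$, the total mass $\mu([0,r])=r^\gamma/\gamma$ is bounded by a $\gamma$-dependent constant, so H\"older's inequality yields the $L^p$ estimate from the $L^{p_*}$ estimate for every permitted $p\leq p_*$; hence it suffices to treat $p=p_*$.

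To produce a function that Lemma~\ref{BG-lem1-techn} can be applied to, extend $h$ by the constant value $h(r)$ on $[r,2r]$, call the result $\tilde h$, and multiply by a Lipschitz cutoff $\eta$ on $[0,2r]$ with $\eta\equiv 1$ on $[0,r]$, $\eta(2r)=0$, $0\leq\eta\leq 1$ and $|\eta'|\leq 2/r$. Set $H:=\tilde h\,\eta$; then $H\geq 0$, $H(2r)=0$, and $H=h$ on $[0,r]$. Standard mollification recovers the $C^1$ regularity required by Lemma~\ref{BG-lem1-techn}. Since $\int_0^r h^{p_*}\,d\mu\leq\int_0^{2r}H^{p_*}\,d\mu$, it remains to bound $\int_0^{2r}|H'|^2\,d\mu$ by a constant depending on $r'$ times $\int_0^r(|h'|^2+|h|^2)\,d\mu$. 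On $[0,r]$, $\eta\equiv 1$ forces $H'=h'$, contributing exactly $\int_0^r|h'|^2\,d\mu$. On $[r,2r]$, $\tilde h'\equiv 0$ forces $H'=h(r)\eta'$, so this piece contributes at most $C_\gamma\,|h(r)|^2\,r^{\gamma-2}$, using $|\eta'|^2\leq 4/r^2$, the bound $s^{\gamma-1}\leq C_\gamma r^{\gamma-1}$ on $[r,2r]$, and the interval length $r$.

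The remaining step, which is also the main obstacle, is the trace-type estimate
$$|h(r)|^2\,r^{\gamma-2}\leq C_\gamma\,(r')^{-2}\int_0^r(|h'|^2+|h|^2)\,d\mu.$$
I start from $|h(r)|^2\leq 2|h(s)|^2+2(r-s)\int_s^r|h'|^2\,dt$ (fundamental theorem of calculus plus Cauchy--Schwarz), valid for every $s\in[r/2,r]$, multiply by $s^{\gamma-1}$, and integrate $s$ over $[r/2,r]$. Using $\int_{r/2}^r s^{\gamma-1}\,ds=c_\gamma r^\gamma$ on the left, and on the right the fact that $s^{\gamma-1}$ is comparable to $r^{\gamma-1}$ on $[r/2,r]$ (which, after Fubini, converts the unweighted $|h'|^2$ integral back into the weighted one, with the factor $(r-s)^2\leq r^2/4$ absorbed), I obtain
$$|h(r)|^2\,r^\gamma\leq C_\gamma\Big(\int_0^r|h|^2\,d\mu+r^2\int_0^r|h'|^2\,d\mu\Big).$$
Dividing by $r^2$ and invoking $r\geq r'$ yields the trace estimate, whose $(r')^{-2}$ factor is the unique source of $r'$-dependence in the final constant $\hat C$. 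Combining this bound with the identity on $[0,r]$ and applying Lemma~\ref{BG-lem1-techn} to $H$ finishes the proof for $p=p_*$, hence for every admissible $p$ by the first step.

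The main obstacle is precisely this trace term: the quantity $|h(r)|^2 r^{\gamma-2}$ cannot be controlled by the weighted $H^1$ norm on $[0,r]$ with a constant uniform in $r$, and the hypothesis $r\geq r'$ is exactly what makes the offending factor $r^{-2}$ bounded by $(r')^{-2}$. The degenerate endpoint $N=\alpha-1$ (formally $\gamma=0$, $p=\infty$) lies outside the range of Lemma~\ref{BG-lem1-techn} but can be obtained directly from the same FTC-plus-Cauchy--Schwarz estimate applied to $h$ on subintervals, without invoking the cutoff argument.
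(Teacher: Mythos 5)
The paper itself offers no proof of this lemma: it is quoted from the Appendix of \cite{BG84} (``The following two lemmas can be found in [BG84, Appendix]''), so there is no in-paper argument to compare with, and your task here was effectively to reconstruct the Baras--Goldstein estimate. Your proof is correct and self-contained, and it is a genuinely useful derivation \emph{from} Lemma \ref{BG-lem1-techn} rather than an independent repetition of it: you first reduce to the critical exponent $1/p_*=1/2-1/\gamma$, $\gamma=2N+2-2\alpha$, by H\"older (legitimate because $r\le 1$ makes the weighted measure $s^{\gamma-1}ds$ have mass bounded by a constant depending only on $\gamma$), then extend $h$ by the constant $h(r)$, cut off at $2r$, and apply Lemma \ref{BG-lem1-techn}; the only new ingredient needed is the boundary term $|h(r)|^2 r^{\gamma-2}$, which your averaged fundamental-theorem/Cauchy--Schwarz estimate over $s\in[r/2,r]$ controls by $C_\gamma\big(r^{-2}\int_0^r h^2\,d\mu+\int_0^r|h'|^2\,d\mu\big)$, and $r^{-2}\le (r')^{-2}$ is indeed the sole source of the $r'$-dependence announced in the statement. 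Three minor caveats, none fatal: (a) your argument inherits from Lemma \ref{BG-lem1-techn} the restriction $\gamma>2$, i.e.\ $\alpha<N$, but that is exactly the hypothesis under which the cited lemma is stated, so nothing is lost; (b) the degenerate case $N=\alpha-1$, $p=\infty$, is only sketched, though the sketch does close: averaging the pointwise bound $|h(x)|^2\le 2|h(s)|^2+2(x-s)\int_s^x|h'|^2$ over $s\in[x/2,x]$ against $s^{-1}ds$, whose total mass $\ln 2$ is independent of $x$, gives the sup bound with a constant not even depending on $r'$; (c) the mollification used to restore $C^1$ regularity should be arranged so as to preserve the vanishing at $2r$ and the nonnegativity (e.g.\ choose the cutoff to vanish on a neighborhood of $2r$), a routine point worth one sentence if this were written out in full.
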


The following lemma is needed for the proof of Lemma \ref{technical-lemma}.
\begin{lemma}\label{lem:GGKRT}
If $k\in C^1(\R_+,\R_+),\,\tilde{k}(w):=k(d(w)),\,\phi(w)=d(w)^{-\alpha},\,w=(z,l)\in \HH^N,$ and $0<\beta $ is such that $\beta+\frac{2}{p}=1$, where $\frac{1}{p}=\frac{1}{2}-\frac{1}{2N-2\alpha +2}$, then

\begin{equation}\label{eq:32}\begin{array}{ll}
\displaystyle\int_{B_r}\tilde{k}^{2+2\beta}(w)\phi^2(w)\,dw\leq \\ \\
  \hat{\hat C} \left(\displaystyle\int_{B_r}\left( |\nablah \tilde{k}(w)|^2+\tilde{k}^2(w) \right)\phi^2(w)\,dw\right)\left( \displaystyle \int_{B_r}\tilde{k}^2(w)\phi^2(w)\,dw \right)^\beta.\end{array} 
\end{equation}

\end{lemma}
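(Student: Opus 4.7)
The plan is to exchange the integrals over $B_r \subset \HH^N$ for weighted one-dimensional integrals in the variable $s = d(w)$, and then to combine a Hölder interpolation with the weighted Sobolev inequality of Lemma \ref{BG-lem2-techn}. The key structural facts to exploit are that $\tilde k$ and $\phi$ depend only on $d$, and that $|\nablah d|^2 = |z|^2/d^2$ is $D_\lambda$-homogeneous of degree zero (one checks directly that $d(D_\lambda w) = \lambda\, d(w)$ and $|\nablah d(D_\lambda w)|^2 = |\nablah d(w)|^2$).

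First, I would invoke the polar decomposition on $\HH^N$ associated with the gauge $d$: writing $w = D_s\xi$ with $s = d(w)$ and $\xi \in \Sigma := \{d=1\}$, one has $dw = s^{Q-1}\,d\sigma(\xi)\,ds$ for a suitable measure $d\sigma$ on $\Sigma$ (cf.~\cite{BLU}). Since $Q = 2N+2$ and hence $Q-1-2\alpha = 2N+1-2\alpha$, this yields, for each exponent $q \in \{2, p, 2+2\beta\}$,
$$\int_{B_r}\tilde k^q \phi^2\,dw = |\Sigma|\int_0^r k(s)^q\, s^{2N+1-2\alpha}\,ds,$$
while the degree-zero homogeneity of $|\nablah d|^2$ pulls it out of the radial integration as a finite constant, giving
$$\int_{B_r}|\nablah \tilde k|^2 \phi^2\,dw = c_\Sigma \int_0^r |k'(s)|^2\, s^{2N+1-2\alpha}\,ds,\quad c_\Sigma := \int_\Sigma |\nablah d|^2\,d\sigma > 0.$$

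Next, since $\beta + 2/p = 1$, I would apply Hölder's inequality with conjugate exponents $p/2$ and $1/\beta$ to the splitting $\tilde k^{2+2\beta}\phi^2 = \bigl(\tilde k^2\phi^{4/p}\bigr)\bigl(\tilde k^{2\beta}\phi^{2-4/p}\bigr)$, obtaining the interpolation
$$\int_{B_r}\tilde k^{2+2\beta}\phi^2\,dw \leq \Bigl(\int_{B_r}\tilde k^p\phi^2\,dw\Bigr)^{2/p}\Bigl(\int_{B_r}\tilde k^2\phi^2\,dw\Bigr)^{\beta}.$$
The polar-decomposition identities turn the first factor on the right into $|\Sigma|^{2/p}\bigl(\int_0^r k^p\, s^{2N+1-2\alpha}\,ds\bigr)^{2/p}$, which is exactly the quantity estimated by Lemma \ref{BG-lem2-techn} with $\gamma = 2N+2-2\alpha$; the hypothesis $1/p = 1/2 - 1/(2N+2-2\alpha)$ sits at the boundary of the admissible range in that lemma. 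Chaining the resulting bound through the gradient and $L^2$ identities then produces \eqref{eq:32}.

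The one step that requires the most care is the reduction of the Heisenberg gradient integral to the correct one-dimensional weighted integral: one must use that $|\nablah d|^2$ is $D_\lambda$-homogeneous of degree zero so that the spherical integration over $\Sigma$ contributes only a finite positive constant and no extraneous factor of $s$ appears, leaving precisely the weight $s^{2N+1-2\alpha}$ matching Lemma \ref{BG-lem2-techn}. Once this homogeneity bookkeeping is carried out, the remainder is a standard Gagliardo–Nirenberg-type interpolation.
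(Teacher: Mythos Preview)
Your proposal is correct and follows essentially the same route as the paper: reduce the $\HH^N$-integrals to weighted one-dimensional integrals in $s=d(w)$, apply H\"older with exponents $p/2$ and $1/\beta$, and invoke Lemma~\ref{BG-lem2-techn}. The only cosmetic difference is that the paper carries out the radial reduction via the explicit change of variables $\rho=|z|$, $\rho^2=s^2\cos\varphi$, $l=s^2\sin\varphi$ (obtaining the concrete constant $C_N=S_{2N-1}\int_{-\pi/2}^{\pi/2}\cos^N\varphi\,d\varphi$), whereas you use the abstract dilation-based polar decomposition $w=D_s\xi$ together with the degree-zero homogeneity of $|\nablah d|^2$; your observation that the spherical constants for the gradient and non-gradient integrals differ ($c_\Sigma$ versus $|\Sigma|$) is in fact sharper than the paper's presentation, though immaterial to the final inequality.
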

\begin{proof}
We first  prove that
\begin{equation}\label{eq:finale}
\int_{B_{r}}|\nablah k(d(\omega))|^2 \phi ^2(d(w))\,dw
=C_N\int_0^{r} \left|k'(s))\right|^2s^{2N-2\alpha +1}\,ds,
\end{equation}
where $C_N:=S_{2N-1} \int_{-\frac{\pi}{2}}^{\frac{\pi}{2}}\cos^N\varphi \,d\varphi$ and 
$S_{2N-1}$ the surface area of the unit ball in $\R^{2N}$.

Consider the change of variables $\rho =|z|$ in  polar coordinates, and take
$$\left\{\begin{array}{ll}
\rho^2=r^2\cos \varphi, \\
l=r^2\sin \varphi
\end{array}
\right.
$$
with $\varphi \in [-\frac{\pi}{2},\frac{\pi}{2}]$. Recalling from Lemma \ref{lem-d} that $|\nablah d(w)|^2=|z|^2d(w)^{-2},\quad w=(z,l)\in \HH ^N$ and since $\nablah k(d(w))=k'(d(w))\nablah d(w)$ we obtain
\begin{eqnarray*}
& & \int_{B_r} |\nablah k(d(w)) |^2 \phi ^2(d(w))\,dw \\
&=& S_{2N-1} \int |\nablah k (\sqrt[4]{\rho^4+l^2})|^2\phi ^2(\sqrt[4]{\rho^4+l^2})\rho^{2N-1}\,d\rho dl \\
&=& S_{2N-1} \int_{-\frac{\pi}{2}}^{\frac{\pi}{2}}\int_0^r | k'(s)|^2(s^2\cos \varphi) s^{-2}s^{-2\alpha}(s\sqrt{\cos \varphi})^{2N-1}
    \left(\frac{s^2}{\sqrt{\cos \varphi}}\right)\,ds d\varphi\\
&=& C_N \int_0^r |k'(s)|^2 s^{2N-2\alpha +1}\,ds
\end{eqnarray*}
for $C_N:=S_{2N-1} \int_{-\frac{\pi}{2}}^{\frac{\pi}{2}}\cos^N\varphi \,d\varphi$.

It follows from H\"older's inequality, \eqref{eq:finale} and Lemma \ref{BG-lem2-techn} that
\begin{align*}
&\int_{B_r}k^{2+2\beta}(w)\phi^2(w)\,dw=C_N\int_0^rk^{2+2\beta}(s)\phi^2(s)s^{2N+1}ds\\
&\quad\leq \left(C_N \int_{0}^r k^p(s)\phi^2(s)s^{2N+1} ds\right)^{2/p}
    \left( C_N\int_{0}^rk^2(s) \phi^2(s) s^{2N+1} ds\right)^{\beta}\\
&\quad= \left(C_N \int_{0}^r k^p(s)s^{2N-2\alpha+1} ds\right)^{2/p}
    \left( \int_{B_r}k^2 \phi^2\right)^{\beta}\\
&\quad \leq \hat{ C}C_N^{\frac{2}{p}-1}C_N \int_{0}^r\left( |k'(s)|^2+k^2(s) \right)s^{2N-2\alpha+1}ds
\left( \int_{B_r}k^2\phi^2 \right)^\beta\\
&\quad = \hat{\hat C} \left( \int_{B_r}|\nablah k|^2\phi^2+\int_{B_r}k^2\phi^2 \right)
    \left( \int_{B_r}k^2\phi^2 \right)^{\beta}.
\end{align*}
This completes the proof.
\end{proof}

We conclude this Appendix by proving the following perturbation result.
\begin{proposition}\label{pr:Bddper}
Assume that the problem \eqref{eq:pb4} has a solution for some $u_0\geq 0$, $f\geq 0$ and let $B\in L^\infty (\HH^N)$.
Then the problem
\begin{equation}\label{eq:pb4B}
 \left\{
 \begin{array}{ll}
 \dfrac{\partial u}{\partial t}=\Deltah u+V(\cdot)u+B(\cdot)u+f & \text{ in }{\mathcal D}'  (\HH^N\times (0,T)),\\ \\
  {\rm esslim}_{t\to 0^+}\displaystyle\int_{\HH^N}u(w,t)\psi(w)\,dw=\int_{\HH^N} u_0(w)\psi (w)\,dw&
  		\text{ } \forall\  \psi\in {\mathcal D}(\HH^N), \\ \\
  u\geq 0 &\text{ on } \HH^N\times(0,T),\\ \\
  Vu\in L^1_{loc}(\HH^N\times(0,T)),
 \end{array}
 \right.
\end{equation}
has a solution.
\end{proposition}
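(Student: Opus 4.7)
The plan is to solve \eqref{eq:pb4B} by an approximation scheme parallel to \eqref{eq:pb-approx}, controlling the new approximants by the solution $\tilde u$ of \eqref{eq:pb4} whose existence is assumed. Let $\tilde u_n$ be the solution of \eqref{eq:pb-approx} built from $V_n=\min\{V,n\}$ and $f_n=\min\{f,n\}$; by the last part of Proposition \ref{pr:1} (together with the maximum principle) one has $\tilde u_n\le\tilde u<+\infty$ a.e.\ on $\HH^N\times(0,T)$. Set $M:=\|B\|_{\infty}$ and consider
\begin{equation*}
\partial_t u_n=\Deltah u_n+(V_n+B)u_n+f_n,\qquad u_n(0)=u_0.
\end{equation*}
Since $V_n+B$ is bounded, writing $V_n+B=(V_n+B+K)-K$ with $K$ so large that $V_n+B+K\ge 0$ and iterating the Duhamel formula associated with the positive semigroup $e^{t(\Deltah-K)}$ yields a unique nonnegative $u_n$. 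The sequence $\{u_n\}$ is nondecreasing: $w:=u_{n+1}-u_n$ satisfies $\partial_t w=\Deltah w+(V_{n+1}+B)w+[(V_{n+1}-V_n)u_n+(f_{n+1}-f_n)]$ with $w(0)=0$ and nonnegative right-hand side, hence $w\ge 0$ by the maximum principle.

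To show that $\lim_n u_n$ is finite I introduce $v_n(t):=e^{-Mt}u_n(t)$; a direct computation gives
\begin{equation*}
\partial_t v_n=\Deltah v_n+(V_n+B-M)v_n+e^{-Mt}f_n.
\end{equation*}
Since $B-M\le 0$ and $v_n\ge 0$, while $e^{-Mt}f_n\le f_n$, the difference $\tilde u_n-v_n$ satisfies a parabolic equation with bounded coefficient $V_n$, zero initial datum, and nonnegative source $(M-B)v_n+(1-e^{-Mt})f_n$; the classical maximum principle then yields $v_n\le\tilde u_n$, whence
\begin{equation*}
u_n(w,t)\le e^{Mt}\tilde u_n(w,t)\le e^{MT}\tilde u(w,t)<+\infty\quad\text{for a.e.\ }(w,t)\in\HH^N\times(0,T).
\end{equation*}
Consequently $u:=\lim_n u_n\le e^{MT}\tilde u$ is finite a.e.

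Passing to the limit is then routine. Monotone convergence applied to the nondecreasing sequence $V_n u_n$, which is dominated by $Vu\le e^{MT}V\tilde u\in L^1_{loc}$, yields $V_nu_n\to Vu$ in $L^1_{loc}(\HH^N\times(0,T))$; dominated convergence with majorant $Me^{MT}\tilde u$ gives $Bu_n\to Bu$ in $L^1_{loc}$; and $f_n\uparrow f$ in $L^1$. Letting $n\to\infty$ in the distributional formulation of the equation for $u_n$ produces the PDE in \eqref{eq:pb4B}; the initial-trace condition is recovered exactly as in the proof of Proposition \ref{pr:1}, the only modification being that the Daletskii--Trotter estimate produces the factor $e^{\delta(c_n+M)}$ in place of $e^{\delta c_n}$, which is still harmless as $\delta\to 0$. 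The main obstacle is the comparison step $v_n\le\tilde u_n$: it is here that the hypothesis $B\in L^\infty$ enters essentially, through the exponential substitution $u_n\mapsto e^{-Mt}u_n$ that converts the sign-indefinite bounded potential $B$ into the nonpositive quantity $B-M$ which can be discarded in the parabolic inequality.
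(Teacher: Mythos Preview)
Your proof is correct and follows essentially the same approach as the paper: both compare the perturbed approximants to the unperturbed ones via an exponential time-weight that absorbs the bounded potential $B$. The paper multiplies the original approximants by $e^{\lambda t}$ (with $\lambda\ge\|B\|_\infty$) and invokes the maximum principle to get $v_n\le e^{\lambda t}u_n$, whereas you divide the perturbed approximants by $e^{Mt}$ and compare downward; these are two views of the same inequality, and your handling of the limit (splitting $V_nu_n$ and $Bu_n$ and using monotone and dominated convergence respectively) is in fact slightly more careful than the paper's.
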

\begin{proof}
Let $u_n$ be the
solution of  \eqref{eq:pb-approx}. We know that $u_n\uparrow u$, $u$ being a solution of \eqref{eq:pb4}.
Suppose that $v_n$ solves
\begin{equation}\label{eq:pb-approxB}
 \left\{
 \begin{array}{ll}
 \dfrac{\partial v_n}{\partial t}=\Deltah v_n+\left(V_n(\cdot)+B(\cdot)\right)v_n+f_n & \text{ in }{\mathcal D}'_T,\\
 \\
  {\rm lim}_{t\to 0^+}\displaystyle\int_{\HH^N}v_n(w,t)\psi(w)\,dw=\displaystyle\int_{\HH^N} u_0(w)\psi (w)\,dw,&\forall \psi\in {\mathcal D}(\HH^N), \\
 \end{array}
 \right.
\end{equation}
where $f_n=\min\{f,n\}$ and $V_n=\min\{V,n\}$. Fix $\lambda\geq \|B\|_\infty$, and consider
\[
 U_n=e^{\lambda t}u_n.
\]
So  $U_n$ satisfies
\[
 \frac{\partial U_n}{\partial t}=\Deltah U_n+(V_n+\lambda)U_n+e^tf_n.
\]
By the Maximum Principle we have
\[
 v_n(w,t)\leq U_n(w,t)\leq e^{\lambda t}u(w,t) \,\hbox{\ for a.e. } (w,t)\in \HH^N\times (0,T).
\]
Clearly  $\{v_n\}$  is an increasing sequence  and since  $u, Vu\in L^1_{loc}(\HH^N\times (0,T))$, it follows by the Monotone Convergence
theorem that $v_n\uparrow v$ and $(V_n+B)v_n\uparrow (V+B)v$ in $L^1_{loc}(\HH^N\times (0,T))$, and $v$ gives a solution of
\eqref{eq:pb4B}.
\end{proof}

\section*{Acknowledgement} A.E. Kogoj, A. Rhandi and C. Tacelli have been partially supported by the Gruppo Nazionale per l'Analisi Matematica, la Probabilit\`a e le loro Applicazioni (GNAMPA) of the Istituto Nazionale di Alta Matematica (INdAM).

\bibliographystyle{amsplain}


\end{document}